\DeclareMathAlphabet{\pazocal}{OMS}{zplm}{m}{n}
\let\oldReturn\Return
\renewcommand{\Return}{\State\oldReturn}
\pgfplotsset{compat=1.5}
\newcommand{\B}{\pazocal{B}}
\newcommand{\K}{\pazocal{K}}
\newcommand{\W}{\pazocal{W}}
\newcommand{\D}{\pazocal{D}}
\newcommand{\E}{\pazocal{E}}
\newcommand{\X}{\pazocal{X}}
\newcommand{\U}{\pazocal{U}}
\newcommand{\F}{\pazocal{F}}
\newcommand{\J}{\pazocal{J}}
\newcommand{\I}{\pazocal{I}}
\renewcommand{\S}{\pazocal{S}}
\newcommand{\x}{{X}}
\renewcommand{\u}{{U}}
\newcommand{\dx}{{X'}}
\newcommand{\remove}[1]{}
\def \cN{{\mathcal N}}
\def \*{\star}
\def \10n{\!\!\!\!\!\!\!\!\!\!}
\def \FAB {{\F(\bA,\bB)}}
\def \Fc {{\F(\bA,\bB,c)}}
\def \c {{C_{\rm OPT}}}
\def \flp {{f^\*_{\rm LP}}}
\newcommand{\pl}{{\varphi_{\min}}}
\newcommand{\R}{\mathbb{R}}
\newcommand{\bA}{\bar{A}}
\newcommand{\bB}{\bar{B}}
\newcommand{\bC}{\bar{C}}
\newtheorem{theorem}{Theorem}
\newtheorem{lem}[theorem]{Lemma}
\newtheorem{defn}[theorem]{Definition}
\newtheorem{cor}[theorem]{Corollary}
\newtheorem{rem}[theorem]{Remark}
\newtheorem{prob}[theorem]{Problem}
\newtheorem{prop}[theorem]{Proposition}
\numberwithin{theorem}{section}
\title{\LARGE \bf Approximating Constrained Minimum Input Selection for State Space Structural Controllability}
\author{Shana~Moothedath,
        Prasanna~Chaporkar
        and~Madhu~N.~Belur
\thanks{The authors are in the Department of Electrical Engineering, Indian Institute of Technology Bombay, India. Email: $\lbrace$shana, chaporkar, belur$\rbrace$@ee.iitb.ac.in. This work was supported in part by SERB (DST) and BRNS, India.}}
\begin{document}
\maketitle
\thispagestyle{empty}
\pagestyle{empty}

\begin{abstract}
This paper looks at two problems, minimum constrained input selection and minimum cost constrained input selection for state space structured systems. The input matrix is constrained in the sense that the set of states that each input can influence is pre-specified and each input has a cost associated with it. Our goal is to optimally select an input set from the set of inputs given that the system is controllable. These problems are known to be NP-hard. Firstly, we give a new necessary and sufficient graph theoretic condition for checking structural controllability using flow networks. Using this condition we give a polynomial reduction of both these problems to a known NP-hard problem, the minimum cost fixed flow problem (MCFF). Subsequently, we prove that an optimal solution to the MCFF problem corresponds to an optimal solution to the original controllability problem. We also show that approximation schemes of MCFF directly applies to minimum cost constrained input selection problems. Using the special structure of the flow network constructed for the structured system, we give a polynomial approximation algorithm based on minimum weight bipartite matching and a greedy selection scheme for solving MCFF on system flow network. The proposed algorithm gives a $\Delta$-approximate solution to MCFF, where $\Delta$ denotes the maximum in-degree of input vertices in the flow network of the structured system.
\end{abstract}
\begin{IEEEkeywords}
Structural controllability, Minimum input structural controllability, Maximum flow problem, Minimum cost fixed flow problem, Approximation algorithms.
\end{IEEEkeywords}

\section{Introduction} \label{sec:intro}
We consider a control system $\dot{x} = Ax + Bu$, $y = Cx$, where $A$ is a state matrix, $B$ is an input matrix, $C$ is an output matrix, $x$ is a state vector, $u$ is a vector of inputs and $y$ is a vector of outputs. We assume that the exact entries of $A$, $B$ and $C$ are not known, rather only the location of the zero entries is known. Each input and output has a cost associated with it. Our aim in this paper is to choose a subset of inputs (outputs, resp.) that keeps the system controllable (observable, resp.) while minimizing the cost. We motivate the problem with the following examples.

\subsection{RLC Network Example}
Consider an electric network with cascaded series and parallel RLC circuits. Inputs to the network are voltage and current sources and the states are the capacitor voltages and inductor currents. Controllability of the network is about using the network inputs to drive the system from arbitrary initial state to arbitrary final state in a suitable finite time. A network is said to be controllable if it is possible to achieve arbitrary specified voltages across the capacitor terminals and currents through the inductors. In this context, given permissible locations for including voltage and current sources, it is a legitimate question to ask for the minimum number of sources to include in the network such that it is controllable. In addition, assume that there is a cost associated with each source based on installation, monitoring, reliability and so on. Then it is useful to find the set of sources needed to control the whole network such that the total cost incurred is minimum. The conventional method of checking controllability of the above described network is using Kalman's rank criterion and this requires the information about the state matrix $A$ and input matrix $B$. The entries of the state matrix depends not only on the connections of the network but also on the numerical values of its circuit components. Practically, the numerical values of the circuit components are not known accurately. Their actual values lie within some tolerance of the specified values. In addition, due to the environmental effects and ageing the values of the circuit components deviate from the specified values. Taking into consideration the above facts, practically the exact values of most of the entries of the state matrix are not known. However, certain entries are precisely known and this mostly happens for the zero entries. A zero entry becomes non-zero if a new connection is made in the network and this happens with the knowledge of the network designer. Hence, we assume that the numerical values of the circuit components are not known, but the presence of links or connections in the network are known. More precisely, the state matrix is not known, but the zero and non-zero pattern (sparsity pattern) of the state matrix is known. Our aim is to find the minimum set of sources and the minimum cost incurring set of sources to include in the network for `almost' all realizations of $A$ and $B$ matrices having the pre-specified sparsity pattern such that the network is controllable.

\subsection{Complex Network Example}
Study of complex networks has achieved broad research interest in the last decade because of its wide range of applications in various fields, including biology, chemistry, economics, technology and electrical. To motivate the problems considered in this paper, we explain one such application here. Consider a social network consisting of players where each player is linked to a set of players under relationships of various types including friendship, kinship, official and political. The nature and importance of relation of a player decides the link weights of his connections to other players in the graph. Our aim is to find the minimum set of inputs (where each input can influence a pre-specified set of players) from the given feasible input set such that the whole system is controllable. In this context, the system is said to be controllable if each player in the graph is controllable. In addition, suppose each input is associated with a cost depending on the nature of the information to be passed in the network. Then our aim is to find the set of inputs that control the network by incurring minimum cost. The costs associated with inputs play a vital role in selection preferences of various applications like (a) leader selection where certain agents are preferred over others for performing some specific task \cite{MesEge:10} or (b) optimal selection of key players in a social network for circulating information where the preference depends on the nature of the information \cite{Bor:06}. Considering enormous sizes of the complex networks often the link weights of the graph are not known, but the presence of links is known. Even if the link weights are known, finding an optimal set of inputs by brute-force requires checking controllability of $2^n -1$ distinct combinations, where $n$ is the system dimension. Complex networks are of large dimensions and hence using this method for finding optimal input set is not feasible. Summarizing, the massive size of these systems makes it a challenging task to find a minimum set of inputs to control the system. Additionally, if the possible set of inputs is also given, then the problem becomes constrained making it more challenging.

Lin proved that controllability and observability can be verified generically even if the numerical entries of matrices $A, B$ and $C$ are not known, but their sparsity patterns are known \cite{Lin:74}. The study of system properties like controllability and observability using the sparsity patterns of system matrices instead of the matrices themselves is called  as {\it structural controllability} and {\it structural observability} respectively. However, because of duality between controllability and observability in linear time invariant systems, method for solving one automatically leads to a method for solving the other \cite{Kai:80}. So, we will discuss only the controllability problem in this paper. The observability counter part follows by duality.

Structural controllability is a well studied problem and graph theoretic formulations of its variants are available. They mainly use concepts of bipartite matching and graph connectivity, see \cite{LiuBar:16} and references therein for more details. For instance, the problem of identifying the minimum number of inputs required to achieve structural controllability is considered and conditions using maximum matching are given in \cite{ConDioWou:02}, \cite{LiuSloJeaBar:11}. Here structure of input matrix is not specified which makes the problem polynomial complexity. Another variant, input addition for structural controllability is studied using graph theoretic tools like Dulmage-Mendelsohn (DM) decomposition and matching in \cite{ComDio:13}. Similarly, necessary and sufficient conditions for strong structural controllability is given using the concept of constrained matchings in \cite{ChaMes:13}. However, optimal selection of inputs for structural controllability, referred to as {\it minimum input design} problem, is the most addressed variant in literature.

Given the sparsity pattern of the state matrix, the minimum input design problem aims at finding the sparsest input matrix that can control the system. Polynomial algorithms of complexities $O({\ell} n^{1.5}), O(n^3)$ and $O(n + \ell\sqrt{n})$ are given in \cite{PeqKarAgu:13}, \cite{PeqKarAgu_2:16} and \cite{Ols:15} respectively for solving this problem, where $\ell$ denotes the number of non-zero entries in the state matrix and $n$ is the number of states. Some papers on minimum input design focus on dedicated input control, where every input can directly control a single state only \cite{Ols:15}, \cite{PeqKarAgu:13}. Note that here the input matrix is diagonal. Another problem studied in literature is finding a single input vector with minimum number of non-zero entries such that the given structured system is controllable. An explicit
characterization of the solution of the minimum input design problem when the input matrix is of dimension one is given in \cite{ComDio:15}. Minimum cost input design problem is studied in \cite{PeqKarAgu:16}. The objective here is to find an input matrix that can control the system as well as incur minimum cost when each state is associated with a cost. Thus given the structure of state matrix, finding an input matrix with minimum number of non-zero entries or that incur minimum cost is polynomially solvable and efficient algorithms are available. However, if the input matrix is specified and the aim is to select an optimal input set, then the minimum controllability problems are NP-hard \cite{PeqSouPed:15}. We refer to these problems as {\it minimum constrained input selection} problems.

 The NP-hardness result of the {\it minimum constrained input selection} problem is given in \cite{PeqSouPed:15}. Consequently, the {\it minimum cost constrained input selection} problem also turns out to be NP-hard.  Here the aim is to find an optimal set of input that control the system and incur in minimum cost when each input has a cost associated with it. However, if the state bipartite graph (see Section \ref{sec:graph} for more details) has a perfect matching and the inputs are dedicated i.e., diagonal input matrix, then minimum constrained input selection problem is not NP-hard and this case is considered in \cite{PeqSouPed:15}. Similarly, if the state digraph (see Section \ref{sec:graph} for more details) is irreducible (a digraph is said to be irreducible if there exists a directed path between every distinct pair of nodes), then the minimum cost constrained input selection problem is no longer NP-hard and this case is considered in \cite{PeqKarPap:15}. However, minimum constrained input selection problems are not addressed in their full generality. Reducing these problems to the standard NP-hard problems with good approximation schemes was posed as an open problem in \cite{PeqSouPed:15}. In this paper, {\it  we reduce these problems to the minimum cost fixed flow problem and present a polynomial approximation algorithm for approximating the minimum cost fixed flow problem on the system flow network. Note that in this work we did not impose any assumption on the structured system and both the problems are considered in their full generality.}

Summarizing, this paper develops an algorithm for finding a minimum (in the sense of cost) input set for structural controllability when the sparsity pattern of input matrix $B$ is specified. Henceforth, we will discuss structural controllability problems, namely minimum constrained input selection and minimum cost constrained input selection. Our key contributions are presented below.

\noindent
$\bullet$ We give a new graph theoretic necessary and sufficient condition for checking structural controllability using flow networks (see Theorem \ref{th:maxflow}).

\noindent
$\bullet$ We reduce the minimum {\it cost} constrained input selection problem to a minimum cost fixed flow problem in polynomial time.

\noindent
$\bullet$ We prove that an optimal solution to the minimum cost fixed flow problem corresponds to an optimal solution to the minimum cost constrained input selection problem (see Theorem \ref{th:mcffformulation1}).

\noindent
$\bullet$ We prove that approximation schemes for minimum cost fixed flow problem applies to minimum cost constrained input selection problem ()see Theorem~\ref{th:epsilon1}.

\noindent
$\bullet$ We give an $\Delta$-approximation to the minimum cost constrained input selection problem using minimum cost flow problem with polynomial complexity (see Theorem \ref{th:approximation}).

\noindent
$\bullet$ Using duality between controllability and observability in linear time invariant systems, we extend all results in this paper to {\it minimum cost constrained output selection} problem and {\it minimum constrained output selection} problem (see Remark~\ref{rem:obs1}).

\noindent 
$\bullet$ All the analysis and results given in this paper directly extends to discrete time systems because of the same controllability criterion for continuous and discrete systems (see Remark \ref{rem:discrete}).

The organization of this paper is as follows: in Section~\ref{sec:problem}, we detail the two structural controllability problems considered in this paper. In Section~\ref{sec:graph}, we explain structural controllability using concepts from graph theory. In Section~\ref{sec:equi}, we discuss a relation between structural controllability and maximum flow problem. Using this a new graph theoretic condition for checking structural controllability is also given. In Section~\ref{sec:mcff}, formulation of minimum cost constrained input selection  problem as a flow problem, called minimum cost flow problem, is given. In Section~\ref{sec:approx}, we formulate a linear programming problem for solving the structural controllability problems. Using this a $\Delta$-approximate solution is given. In Section~\ref{sec:cases}, few special classes of structured systems are considered and approximation results for theses classes is presented. Finally, Section~\ref{sec:conclu} gives the concluding remarks.
\section{Problem Formulation}\label{sec:problem}

Let $\bA$ ($\bB$, resp.) be $n \times n$ ($n \times m$, resp.) matrix whose entries are either $\*$ or $0$. We say that $\bA$ and $\bB$ {\it structurally represent} state and input matrices of any control system $\dot{x} = Ax + Bu$ where $A$ and $B$ satisfy:
\begin{eqnarray}\label{eq:struc}
A_{ij} &=& 0 \mbox{~whenever~} \bA_{ij} = 0,\mbox{~and} \nonumber \\
B_{ij} &=& 0 \mbox{~whenever~} \bB_{ij} = 0.
\end{eqnarray}
Note that non-zero entries of $A$ and $B$ can occur only at places where $\bA$ and $\bB$ respectively have $\*$. We refer to matrices $A$ and $B$ that satisfy \eqref{eq:struc} as a {\it numerical realization} of $\bA$ and $\bB$ respectively and $(\bA, \bB)$ as a {\it structured system}. Thus, $(\bA, \bB)$ structurally represents a class of control systems corresponding to all possible numerical realizations. Key idea in structural controllability is to determine controllability of the class of control systems represented by $(\bA, \bB)$. Specifically, we have the following definition.

\begin{defn}\label{def:struccont}
The structured system $(\bA, \bB)$ is said to be structurally controllable if there exists a numerical realization $(A, B)$ such that $(A, B)$ is controllable.
\end{defn}

Even though the definition of structural controllability requires only one controllable realization, it is shown that if a system is structurally controllable then `almost all' numerical realizations of the same structure is controllable  \cite{Rei:88}, \cite{Mur:87}. That is, given a structurally controllable $(\bA, \bB)$, the set of all uncontrollable realizations $(A, B)$ has Lebesgue measure zero. Thus structural controllability is a `generic' property. 
\begin{defn}\label{def:generic}
A property in terms of variables $a_1, a_2, \ldots, a_g$ is said to be satisfied generically if the set $P \subset \R^g$ of values that \underline{do}~\underline{not} satisfy the property is contained in the zero set of some non-zero polynomial in $a_1, a_2, \ldots, a_g$.
\end{defn}

Thus an uncontrollable realization of a structurally controllable system becomes controllable by arbitrarily small perturbations of some of its entries. Consequently, the set of all uncontrollable realizations $(A, B)$ of a structurally controllable system is closed and thin set \footnote{Algebraic variety is `thin' and set of measure zero.}. Thus, structural controllability gives us information about controllability of almost all network realizations without knowing exact numerical entries of its state matrix. 

Structural controllability can be tested in polynomial time \cite{Lin:74}. Here, we propose an alternate flow network based condition to establish structural controllability. Subsequently, for controllable systems, we develop algorithms based on this flow network to find optimal solutions to the optimization problems considered. First, we formally define the optimization problems. Let $(\bA, \bB)$ be structurally controllable.
Consider $\W \subseteq \{1,\ldots,m\}$ and let $\bB_\W$ be the restriction of $\bB$ to columns only in $\W$. Furthermore, let $\K = \{\W : (\bA,\bB_\W) \mbox{ is structurally controllable}\}$. The set $\K$ is non-empty, since for $\W =\{1,\ldots,m\}$, $(\bA, \bB_{\W}) = (\bA, \bB)$ is structurally controllable.

Now we formulate the two problems considered in this paper. Given a structurally controllable system $(\bA, \bB)$, the {\it minimum constrained input selection} (minCIS) problem consists of finding the least cardinality $\J \in \K$. Specifically, we wish to solve the following optimization:
\begin{prob}\label{prob:one}
 Given ($\bA, \bB)$, find
\[ \J^\*~ \in~ \arg\min_{\10n \J \in \K} |\J|. \] 
\end{prob}
Given a structurally controllable  structured system $(\bA$, $\bB)$ and non-negative cost vector $p_u$, where every entry $p_u(j)$, $j=1,2,\ldots,m$, indicates the cost of actuating $j^{\rm th}$ input, the {\it minimum cost constrained input selection} (minCCIS) problem consists of finding a minimum cost input set such that the system is structurally controllable. Specifically, we wish to solve the following optimization: For any $\I \in \K$, define $p(\I) = \sum_{j\in\I}p_u(j)$. 
\begin{prob}\label{prob:two}
Given structurally controllable $(\bA, \bB)$ and $p_u(j), j = 1,2,\ldots, m$, find
\[ \I^\* ~\in~ \arg\min_{\10n \I \in \K} p(\I). \] 
\end{prob}
Let $p^\* = p(\I^\*)$. Thus, $p^\*$ denotes the minimum cost
for constrained input selection that ensures structural controllability. Problem \ref{prob:one} is a special case of Problem \ref{prob:two} when all costs are non-zero and uniform, i.e., $p_u(j) = 1$, for every $j = 1,2,\ldots, m$.

The Problem \ref{prob:one} is shown to be NP-hard\footnote{NP-hard result for Problem \ref{prob:one} is obtained by reducing decision problem corresponding to a well known NP-hard problem, the minimum set covering problem, to an instance of decision verification of Problem \ref{prob:one}.} \cite{PeqSouPed:15}. Thus, its more general case, Problem \ref{prob:two}, is also NP-hard. A subclass of Problem \ref{prob:one} where the state bipartite graph (see Section \ref{sec:graph} for details) has a perfect matching and input matrix is diagonal is considered in \cite{PeqSouPed:15}. This problem is polynomially solvable and a polynomial solution is proposed in \cite{PeqSouPed:15}. Another subclass of Problem \ref{prob:two} where the graph is irreducible \footnote{a graph is said to be irreducible if there exist a directed path between any two distinct pair of nodes} is considered in \cite{PeqKarPap:15}. This subclass is also polynomially solvable and a polynomial solution is given in \cite{PeqKarPap:15}. Our aim is to deal with $(\bA, \bB)$ in their full generality. We formulate both these problems as instances of minimum cost fixed flow problem, where the objective is to minimize the cost associated with the flow. Since Problem \ref{prob:one} is a special case of Problem \ref{prob:two}, from now on we will discuss Problem \ref{prob:two} only. All the analysis and results directly applies to Problem \ref{prob:one}. Before discussing minimum cost flow formulation of Problem \ref{prob:two}, we explain structural controllability using concepts of graph theory in the next section.
\section{Graph Theoretic Results for Structural Controllability}\label{sec:graph}

 \begin{table}[t]
\begin{minipage}{8.5cm}
 \centering
\captionof{table}{Key Notations} 
\resizebox{\textwidth}{!}{%
\begin{tabular}{|c|}
\hline
{Notations\footnote{Digraph refers to directed graphs and graph refers to undirected graphs. Also, $E$ denotes directed edges and $\E$ denotes undirected edges.}}  \\
\hline	
\multicolumn{1}{|l|}{Set of states $V_{\x} = \{x_1,\ldots,x_n \}$} \\
\hline
\multicolumn{1}{|l|} {Set of inputs $V_{\u}$ = $\{u_1,\ldots,u_m \}$}\\
\hline
\multicolumn{1}{|l|} {Set of states $V_{\dx}$ = $\{x'_1,\ldots,x'_n \}$}\\
\hline
\multicolumn{1}{|l|} {Set of edges $E_{\x}$ = $\{(x_j, x_i):\bA_{ij} \neq 0 \}$}\\
\hline
\multicolumn{1}{|l|} {Set of edges $E_{\u}$ = $\{(u_j, x_i):\bB_{ij} \neq 0 \}$}\\
\hline
\multicolumn{1}{|l|} {State digraph $\D(\bA)$ = $\D(V_{\x}, E_{\x})$}\\
\hline
\multicolumn{1}{|l|} {System digraph $\D(\bA, \bB)$ = $\D(V_{\x}\cup V_{\u}, E_{\x}\cup E_{\u})$}\\
\hline
\multicolumn{1}{|l|} {State bipartite graph $\B(\bA)$ = $\B((V_{\x},V_{\dx}), \E_{\x})$}\\
\hline
\multicolumn{1}{|l|} {System bipartite graph $\B(\bA, \bB)$ = $\B((V_{\x}\cup V_{\u},V_{\dx}), (\E_{\x} \cup \E_{\u}))$}\\
\hline
\multicolumn{1}{|l|} {Maximum flow digraph = $\FAB$} \\
 \hline
\multicolumn{1}{|l|} {Minimum cost maximum flow digraph = $\Fc$}\\
\hline
\multicolumn{1}{|l|} {Maximum flow vector of $\FAB$ = $f^\*$}\\
\hline
\multicolumn{1}{|l|} {Optimal flow vector of MCFF on $\Fc$ = $f^\*_M$}\\
\hline
\multicolumn{1}{|l|} {Flow vector constructed in Algorithm~\ref{alg:twostage} = $f_A$}\\
\hline
\multicolumn{1}{|l|} {Optimal flow vector to Problem~\ref{prob:opt} = $f^\*_{LP}$}\\
\hline

\end{tabular} }
\label{tab:notations}
\end{minipage}
\end{table}

 In this section, we briefly describe some existing graph theoretic concepts associated with structural controllability for the sake of completeness (see \cite{LiuBar:16} for details). Key notations are summarized in Table \ref{tab:notations} for easy reference.
 
  The key idea behind considering graph for studying structural controllability is because we can represent the influences of states and inputs on each state through a directed graph. For a structured system $(\bA, \bB)$, construction of the system digraph involves two stages. In the first stage, effects of states on other states is captured and in the second stage effects of inputs on states is captured. In stage one of graph construction we
 construct the state digraph $\D(\bA) := \D(V_{\x}, E_{\x})$, where $V_{\x} = \{x_1, \ldots, x_n \}$ and $(x_j, x_i) \in E_{\x}$ if $\bA_{ij} \neq 0$. Presence of an edge $(x_j, x_i)$ in $\D(\bA)$ indicates that state $x_j$  of the system {\it influences} state $x_i$. 
 To capture the effect of inputs, we construct the system digraph $\D(\bA, \bB) := \D(V_{\x} \cup V_{\u}, E_{\x} \cup E_{\u})$. An edge $(u_j, x_i) \in E_{\u}$ if $\bB_{i, j} \neq 0$ and we say that input $u_j$ influences state $x_i$. Construction of state digraph $\D(\bA)$ and system digraph $\D(\bA, \bB)$ is illustrated through an example in Figure \ref{fig:eg}.

 A system is said to be controllable if it is possible to drive the system to any desired state by applying appropriate input. Thus, for a system to be controllable, it is essential that all states are influenced by inputs. A state gets influenced by input in two different ways, either directly or indirectly and then we say that the state is {\it accessible}. We will explain this concept of state accessibility in the context of structural controllability in more detail here. For a digraph a sequence of directed edges $\{(v_1, v_2), (v_2, v_3), \ldots, (v_{k-1}, v_{k})\}$ in which all vertices are distinct is called an {\it elementary path} from $v_1$ to $v_k$. If there exists an elementary path from vertex $v_1$ to vertex $v_k$, we say that $v_k$ is {\it reachable} from $v_1$. In a structured system, a state $x_i$ is said to be {\it inaccessible} if it is not reachable from any input vertex. Thus an inaccessible state cannot be influenced by any input and the system is uncontrollable. Figure \ref{fig:eg1} demonstrates the concept of state inaccessibility. The two subfigures of Figure \ref{fig:eg1} have the same $\bA$ but different $\bB$'s, say $\bB_1$ and $\bB_2$. The input structure in Figure \ref{fig:digraph1} is such that states $x_1$ and $x_3$ are inaccessible. This is because state $x_2$ gets directly influenced by input $u_1$, but states $x_1$ and $x_3$ are not reachable from $u_1$. However, in Figure \ref{fig:digraph2} the number of inputs is same as before, but all the states are accessible. Here, state $x_1$ gets influenced by input $u_1$ directly, while states $x_2$ and $x_3$ get influenced by $u_1$ indirectly through $x_1$.

\begin{figure}[t]
\centering
\begin{equation*} \label{eq:Amatrix}
\bA = 
\begin{bmatrix}
\* & \* & 0 & 0\\
0 & \* & 0 & 0\\
\* & \* & 0 & \* \\
0 & 0 & 0 & \*\\
\end{bmatrix}
\bB = 
\begin{bmatrix}
\* & 0 & \* \\
0 & \* & \* \\
\* & \* & 0  \\
0 & 0 & \* \\
\end{bmatrix}
\end{equation*}
\begin{subfigure}[b]{0.2\textwidth}
\begin{tikzpicture}[->,>=stealth',shorten >=0.5pt,auto,node distance=2cm,
                thick,main node/.style={circle,draw,font=\Large\bfseries}]

\tikzset{every loop/.style={min distance=10mm,looseness=10}}
    
\path[->] (-0.1,0.15) edge [in=60,out=100,loop] node[auto] {} ();
\path[->] (1.9,0.15) edge [in=60,out=100,loop] node[auto] {} ();
\path[->] (-1.0,-1.65) edge [in=-60,out=-100,loop] node[auto] {} ();
                
\definecolor{myblue}{RGB}{80,80,160}
\definecolor{mygreen}{RGB}{80,160,80}
\definecolor{myred}{RGB}{144, 12, 63}
\definecolor{myyellow}{RGB}{214, 137, 16}

  \node at (0.4,0.2) {\small $x_1$};
  \node at (-1,-1.1) {\small $x_2$};
  \node at (1,-1.1) {\small $x_3$};
  \node at (2.4,0.2) {\small $x_4$};
    
  \fill[myblue] (0,0) circle (5.0 pt);
  \fill[myblue] (2.0,0) circle (5.0 pt);
  \fill[myblue] (1.0,-1.5) circle (5.0 pt);
  \fill[myblue] (-1.0,-1.5) circle (5.0 pt);
  
  \draw (0,-0.15)  ->  (0.85,-1.5);
  \draw (-0.85,-1.5)  ->  (0.85,-1.5);
  \draw (-0.85,-1.5)  ->  (0,-0.15);
  \draw (2,-0.15)  ->  (1.15,-1.5);
     \end{tikzpicture}
\caption{$\D(\bA)$}
\label{fig:digraph}
\end{subfigure}~\hspace{3 mm}
\begin{subfigure}[b]{0.2\textwidth}
\begin{tikzpicture}[->,>=stealth',shorten >=0.5pt,auto,node distance=2cm,
                thick,main node/.style={circle,draw,font=\Large\bfseries}]

\tikzset{every loop/.style={min distance=10mm,looseness=10}}
    
\path[->] (-0.1,0.15) edge [in=60,out=100,loop] node[auto] {} ();
\path[->] (2,-0.15) edge [in=-60,out=-100,loop] node[auto] {} ();
\path[->] (-1.0,-1.65) edge [in=-60,out=-100,loop] node[auto] {} ();
                
\definecolor{myblue}{RGB}{80,80,160}
\definecolor{mygreen}{RGB}{80,160,80}
\definecolor{myred}{RGB}{144, 12, 63}
\definecolor{myyellow}{RGB}{214, 137, 16}

  \node at (-0.3,0.1) {\small $x_1$};
  \node at (-1,-1.1) {\small $x_2$};
  \node at (1,-1.8) {\small $x_3$};
  \node at (2.4,0.2) {\small $x_4$};
    
  \fill[myblue] (0,0) circle (5.0 pt);
  \fill[myblue] (2.0,0) circle (5.0 pt);
  \fill[myblue] (1.0,-1.5) circle (5.0 pt);
  \fill[myblue] (-1.0,-1.5) circle (5.0 pt);
  
  \draw (0,-0.15)  ->  (0.85,-1.5);
  \draw (-0.85,-1.5)  ->  (0.85,-1.5);
  \draw (-0.85,-1.5)  ->  (0,-0.15);
  \draw (2,-0.15)  ->  (1.15,-1.5);

  \node at (1.4,1) {\small $u_1$};
  \node at (0.4,-3) {\small $u_2$};
  \node at (2.4,1) {\small $u_3$};

  \fill[myred] (1,1) circle (5.0 pt);
  \draw [myred] (1,1)  ->   (0.15,0);
  \draw [myred] (1,1)  ->   (1,-1.35);
  \fill[myred] (0,-3) circle (5.0 pt);
  \draw [myred] (0,-3)  ->   (0.85,-1.5);
  \draw [myred] (0,-3)  ->   (-0.85,-1.5);  
  \fill[myred] (2,1) circle (5.0 pt);
  \draw [myred] (2,1)  ->   (2,0.15);
  \draw [myred] (2,1)  ->   (0.15,0);
  \draw [myred] (2,1)  ->   (-0.85,-1.5);        
\end{tikzpicture}
\caption{$\D(\bA, \bB)$}
\label{fig:digraphb}
\end{subfigure}~\hspace{-4 mm}
\caption{The state digraph and system digraph representations of the structured system $(\bA, \bB)$ are shown in Figure \ref{fig:digraph} and Figure \ref{fig:digraphb} respectively.}
\label{fig:eg}
\end{figure}
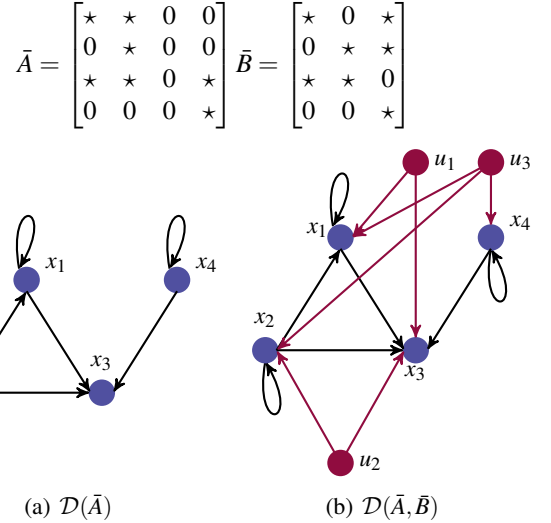

An alternate method for checking if all states are accessible is by using a concept of strong connectedness of the graph. A digraph is said to be strongly connected if for each ordered pair of vertices $(v_1,v_k)$
there exists an elementary path from $v_1$ to $v_k$. A digraph may not always be strongly connected. In such a case, we look at the maximal strongly connected subgraphs of it. A maximal strongly connected subgraph of a digraph, called a {\it strongly connected component} (SCC),  is a subgraph that is strongly connected and is not properly contained in any other subgraph that is strongly connected. To check if all states are accessible, we first generate a {\it directed acyclic graph} (DAG) associated with $\D(\bA)$ by condensing each SCC to a supernode. Thus in this DAG, vertex set comprises of all SCC's. A directed edge exists between two nodes of DAG {\it if and only if} there exists a directed edge connecting vertices in the respective SCC's in the original digraph. Using this DAG, we have a following definition characterizing SCC's of $\D(\bA)$.

\begin{defn}\label{def:scc}
An SCC is said to be \underline{linked} if it has atleast one incoming or outgoing edge from another SCC. Further, an SCC is said to be \underline{non-top} \underline{linked} if it has no incoming edges to its vertices from the vertices of another SCC.
\end{defn}

Clearly, a digraph has no inaccessible states if and only if all \mbox{non-top} linked SCC's are connected to some input vertex. In the example given in Figure \ref{fig:eg}, there are four SCC's, $\{ \{x_1\}, \{x_2\}, \{x_3\}, \{x_4\}\}$. However, there are two \mbox{non-top} linked SCC's, $\cN_1 = \{x_2\}, \cN_2 = \{x_4 \}$.

\begin{figure}[t]
\centering
\begin{subfigure}[b]{0.2\textwidth}
\begin{tikzpicture}[->,>=stealth',shorten >=0.5pt,auto,node distance=2cm,
                thick,main node/.style={circle,draw,font=\Large\bfseries}]
    
\definecolor{myblue}{RGB}{80,80,160}
\definecolor{mygreen}{RGB}{80,160,80}
\definecolor{myred}{RGB}{144, 12, 63}
\definecolor{myyellow}{RGB}{214, 137, 16}

  \node at (-0.0,0.3) {\small $x_1$};
  \node at (-1,-1.8) {\small $x_2$};
  \node at (1,-1.8) {\small $x_3$};

  \fill[myblue] (0,0) circle (5.0 pt);
  \fill[myblue] (1.0,-1.5) circle (5.0 pt);
  \fill[myblue] (-1.0,-1.5) circle (5.0 pt);
  
  \draw (0,-0.15)  ->  (0.85,-1.5);
  \draw (0,-0.15)  ->  (-0.85,-1.5);

  \node at (-1,-0.2) {\small $u_1$};

  \fill[myred] (-1,-0.5) circle (5.0 pt);
  \draw [myred] (-1,-0.5)  ->   (-1,-1.4);
     
\end{tikzpicture}
\caption{$\D(\bA, \bB_1)$}
\label{fig:digraph1}
\end{subfigure}~\hspace{3 mm}
\begin{subfigure}[b]{0.2\textwidth}
\begin{tikzpicture}[->,>=stealth',shorten >=0.5pt,auto,node distance=2cm,
                thick,main node/.style={circle,draw,font=\Large\bfseries}]

\definecolor{myblue}{RGB}{80,80,160}
\definecolor{mygreen}{RGB}{80,160,80}
\definecolor{myred}{RGB}{144, 12, 63}
\definecolor{myyellow}{RGB}{214, 137, 16}

  \node at (-0.0,0.3) {\small $x_1$};
  \node at (-1,-1.8) {\small $x_2$};
  \node at (1,-1.8) {\small $x_3$};

  \fill[myblue] (0,0) circle (5.0 pt);
  \fill[myblue] (1.0,-1.5) circle (5.0 pt);
  \fill[myblue] (-1.0,-1.5) circle (5.0 pt);
  
  \draw (0,-0.15)  ->  (0.85,-1.5);
  \draw (0,-0.15)  ->  (-0.85,-1.5);

  \node at (0,-1.35) {\small $u_1$};

  \fill[myred] (0,-1) circle (5.0 pt);
  \draw [myred] (0,-1)  ->   (0,-0.18);
     
\end{tikzpicture}
\caption{$\D(\bA, \bB_2)$}
\label{fig:digraph2}
\end{subfigure}~\hspace{-4 mm}
\caption{Example demonstrating inaccessibility and dilation of nodes.}
\label{fig:eg1}
\end{figure}
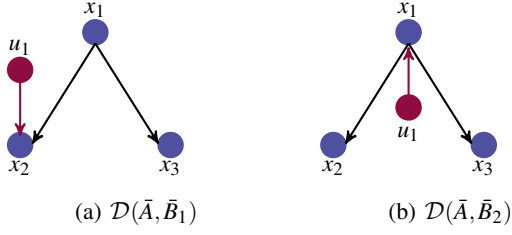

While accessibility of all states is necessary for structural controllability, it is not sufficient. For example see Figure \ref{fig:digraph2}. Note that all states are accessible in this case. However, the system is not controllable. This is because of the fact that the state $x_1$ alone has to control both states $x_2$ and $x_3$. Thus it is not possible to control the difference between these states independently. Thus all states being accessible is not enough to guarantee controllability. In addition to this, we must also ensure that given a set of nodes $S \subset V_{\x}$, the neighbourhood node set of $S$, denoted by $T(S)$ (where node $x_i \in T(S)$, if there exists a directed edge from $x_i$ to a node in $S$) does not have fewer nodes than $S$. Note that, $S \subset V_{\x}$ but $T(S) \subset V_{\x} \cup V_{\u}$. Presence of $S$ such that $|T(S)| < |S|$ is called as {\it dilation}. In short, the accessibility of states and the absence of dilations are necessary for structural controllability. Formally, Lin proved the sufficiency of these two conditions through the following result. 

\begin{prop}[\cite{Lin:74}]\label{prop:lin}
The structured system $(\bA,\bB)$ is structurally controllable if and only if the associated digraph $\D(\bA,\bB)$ has no inaccessible states and has no dilations.
\end{prop}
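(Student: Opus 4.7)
The plan is to split the proof into the necessity and sufficiency directions, since the two directions use very different machinery. Necessity is combinatorial/linear-algebraic and shows that either defect forces \emph{every} numerical realization to be uncontrollable. Sufficiency is the substantive direction: I will argue that the set of controllable realizations is the complement of a proper algebraic variety, hence nonempty (in fact dense), which by Definition \ref{def:struccont} is all that is needed.

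For necessity, I would first suppose that some state $x_i$ is inaccessible. Then in the associated digraph there is no directed path from any input vertex $u_j$ to $x_i$, so for every realization $(A,B)$ and every $k \geq 0$, the $i$th row of $A^k B$ is identically zero (entries of $A^k B$ are sums of products along length-$k$ walks from inputs to $x_i$, and no such walks exist). Hence row $i$ of the Kalman matrix $[B\ AB\ \cdots\ A^{n-1}B]$ vanishes, and the rank is at most $n-1$, contradicting controllability. Next, suppose there is a dilation: a set $S \subseteq V_\x$ with $|T(S)| < |S|$. I would show that then the $|S| \times (n+m)$ submatrix of $[A\ B]$ obtained by restricting to rows indexed by $S$ has all its nonzero entries confined to the $|T(S)|$ columns indexed by $T(S)$; so its generic (and therefore actual) rank is at most $|T(S)| < |S|$. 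Since every column of $A^k B$ is a linear combination of columns of $[A\ B]$ multiplied by powers of $A$, the same row-rank deficiency propagates to the whole controllability matrix, giving rank $<n$.

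For sufficiency, the natural tool is the PBH test: $(A,B)$ is controllable iff $\operatorname{rank}[sI-A,\ B] = n$ for every $s \in \mathbb{C}$. Treating the $\star$-entries of $A$ and $B$ as independent indeterminates, $\det M(s)$ of any $n \times n$ submatrix $M(s)$ of $[sI-A,\ B]$ is a polynomial in $s$ and in the parameters. It suffices to exhibit one $n \times n$ submatrix whose determinant is not the zero polynomial, because then for generic parameter assignments the polynomial is nonzero for all but finitely many $s$, and a further small perturbation of $s$-independent parameters removes those as well; standard generic-rank arguments then produce a controllable realization. The key step is to build such a submatrix combinatorially. Using the no-dilation hypothesis, I would invoke Hall's marriage theorem on the bipartite graph $\B(\bA,\bB)$ to extract a matching saturating $V_\dx$; the matched edges correspond to a set of $n$ nonzero entries that contribute a monomial of maximum degree to a determinantal expansion, and because each indeterminate is independent this monomial cannot be cancelled by any other term. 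This handles the ``no dilation'' contribution to the generic rank. The accessibility hypothesis is then used to handle the remaining obstruction: for each non-top linked SCC, walks from an input vertex into that SCC supply the additional free parameters needed to make $\det[sI-A,\ B](s)$ have a nonzero coefficient at every relevant power of $s$.

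The main obstacle will be the sufficiency direction, and in particular stitching together the matching-based argument (which gives generic full rank of $[A\ B]$) with the accessibility-based argument (which controls the modes of $A$ that the inputs can reach) into a single nonvanishing determinant statement. Purely having generic full rank of $[A\ B]$ does not by itself rule out uncontrollable modes at particular eigenvalues of $A$; conversely, accessibility alone does not prevent a dilation inside a reachable component. I expect the cleanest way to reconcile them is to proceed SCC-by-SCC in the DAG of $\D(\bA)$, assigning parameter values inductively from non-top linked SCCs outward, at each stage using Hall's theorem locally and choosing parameters to keep a pivotal minor nonzero; then the final realization is controllable and, since the ``bad'' set of parameters is contained in the zero locus of a nonzero polynomial, almost all realizations are.
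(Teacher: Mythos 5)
The paper does not actually prove this proposition: it is Lin's structural controllability theorem, imported from \cite{Lin:74} (with the dilation condition later recast via matchings, Proposition 3.6), so there is no in-paper proof to compare against and your attempt must stand on its own. Your necessity direction is essentially correct and standard: inaccessibility of $x_i$ kills row $i$ of every $A^kB$, and a dilation $S$ with $|T(S)|<|S|$ yields a nonzero row vector $v$ supported on $S$ with $vA=0$ and $vB=0$ (this left-null-vector phrasing is cleaner than your ``columns propagate'' remark, but the conclusion is the same), hence $v$ annihilates the entire Kalman matrix.

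The genuine gap is in sufficiency, and it is exactly the one you flag but do not close. The intermediate claim --- that once some $n\times n$ minor of $[sI-A,\ B]$ is a nonzero polynomial in $s$ and the parameters, ``for generic parameter assignments the polynomial is nonzero for all but finitely many $s$, and a further small perturbation of $s$-independent parameters removes those as well'' --- is wrong as stated: the finitely many bad values of $s$ are contained in the spectrum of $A$, and perturbing the parameters moves the eigenvalues rather than eliminating them; the PBH test demands full rank \emph{at} every eigenvalue, not at all but finitely many points of $\mathbb{C}$. Your Hall's-theorem matching argument correctly establishes that the generic rank of $[A\ B]$ is $n$ (the ``no dilation'' half), and accessibility is indeed the right hypothesis for the remaining obstruction, but the sentence ``walks from an input vertex into that SCC supply the additional free parameters needed'' is a placeholder for precisely the hard step of Lin's theorem --- in Lin's original treatment the stem-and-bud (cactus) construction, and in the Shields--Pearson/Glover--Silverman line the proof that accessibility together with generic rank $[A\ B]=n$ forces generic rank $[\lambda I-A,\ B]=n$ at every $\lambda$. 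Your SCC-by-SCC inductive plan is a plausible skeleton for that lemma, but as written the sufficiency direction is an outline whose central step is missing.
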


Checking for dilation in a digraph by brute-force technique is computationally intensive. However, there is a necessary and sufficient condition for determining existence of dilation in terms of matchings of a bipartite graph \cite{Ols:15}. In a bipartite graph $G((V, \widetilde{V}), E)$, where $V \cup \widetilde{V}$ denote the set of nodes satisfying $V \cap \widetilde{V} = \phi$ and $E \subseteq V \times \widetilde{V}$ denote set of undirected edges, a matching $M$ is a collection of edges $M \subseteq E$ such that no two edges in the collection share the same endpoint. That is, for any $(i, j)$ and $(u, v) \in M$, we have $i \neq u$ and $j \neq v$, where $i,u \in V$ and $j,v \in \widetilde{V}$. For understanding the relation between bipartite matching and absence of dilation, we first explain how a bipartite graph is constructed from the system matrices. Corresponding to the state digraph $\D(\bA)= \D(V_{\x},E_{\x})$, we associate the bipartite graph $\B(\bA) := \B((V_{\x},V_{\dx}), \E_{\x})$, where $V_{\x}=\{x_1,x_2, \ldots, x_n \}$, $V_{\dx}=\{x'_1,x'_2, \ldots, x'_n \}$ and $(x_i, x'_{j}) \in \E_{\x} \Leftrightarrow (x_i, x_j) \in E_{\x}$. Similarly, corresponding to the system digraph $\D(\bA, \bB) = \D(V_{\x} \cup V_{\u},E_{\x} \cup E_{\u})$, we associate the bipartite graph $\B(\bA, \bB) := \B((V_{\x} \cup V_{\u}, V_{\dx}), (\E_{\x} 
 \cup \E_{\u}))$, where $V_{\u} = \{u_1,u_2, \ldots, u_m \}$ and $(u_i, x'_{j}) \in \E_{\u} \Leftrightarrow (u_i, x_j) \in E_{\u}$. Note that $\D(\bA), \D(\bA,\bB)$ are digraphs, but $\B(\bA), \B(\bA, \bB)$ are undirected graphs. The bipartite representation of the system given in Figure \ref{fig:eg} is shown in Figure \ref{fig:eg2}. If there exists a perfect matching in $\B(\bA, \bB)$, then it is clear that for any $S \subset V_{\x}$, $|T(S)| \geqslant |S|$. As a result, existence of a perfect matching in $\B(\bA, \bB)$ implies absence of dilation in the digraph $\D(\bA, \bB)$. Further, if there is no dilation, then there exists a perfect matching. This can be understood using a contradiction argument. If there is no dilation and no perfect matching, then consider a maximum matching $M$ in $\B(\bA, \bB)$. Let $U' \subset V_{\dx}$ denote the set of matched vertices in $M$ and let vertex $x'_k$ is unmatched in the matching $M$. Then define $U$ as the set 
of vertices such that $x_i \in U \Leftrightarrow x'_i \in U'$. Then for 
$S = U \cup x_k$, $|S| = |U|+1$. However, $|T(S)| = |U|$ and hence $S = U \cup x_k$ is a dilation. Thus, if there exists no perfect matching in $\B(\bA, \bB)$, then there exists a dilation in $\D(\bA, \bB)$. 
\begin{prop}[\cite{Ols:15}, Theorem 2]\label{prop:dil}
 A digraph $\D(\bA,\bB)$ has no dilations if and only if the bipartite graph $\B(\bA,\bB)$ has a perfect matching.
\end{prop}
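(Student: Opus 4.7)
The plan is to derive Proposition~3.2 by recognising it as a direct instance of Hall's marriage theorem applied to the bipartite graph $\B(\bA,\bB)$, with the side $V_{\dx}$ playing the role of the set that must be saturated. Before doing either implication, I would establish the basic dictionary between the digraph and the bipartite graph: for any $S\subseteq V_{\x}$, set $S':=\{x'_i:x_i\in S\}\subseteq V_{\dx}$, and let $N(S')\subseteq V_{\x}\cup V_{\u}$ denote the bipartite neighbourhood of $S'$ in $\B(\bA,\bB)$. From the edge definitions $(v,x'_j)\in \E_{\x}\cup\E_{\u}$ iff $v\to x_j$ in $\D(\bA,\bB)$, it follows immediately that $N(S')=T(S)$. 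Hence $|T(S)|<|S|$ in the digraph is equivalent to $|N(S')|<|S'|$ in the bipartite graph. This correspondence is the heart of the proof; everything else is routine.

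For the ``if'' direction, assume $\B(\bA,\bB)$ admits a perfect matching $M$, meaning every vertex of $V_{\dx}$ is matched to some distinct vertex of $V_{\x}\cup V_{\u}$. Given any $S\subseteq V_{\x}$, the matching $M$ assigns $|S'|=|S|$ pairwise distinct partners to the vertices of $S'$, and each such partner lies in $N(S')=T(S)$. Therefore $|T(S)|\ge |S|$, so no dilation exists in $\D(\bA,\bB)$.

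For the ``only if'' direction, I would proceed by contrapositive. Suppose no perfect matching of $V_{\dx}$ exists in $\B(\bA,\bB)$. Hall's marriage theorem then guarantees a subset $S'\subseteq V_{\dx}$ with $|N(S')|<|S'|$. Setting $S:=\{x_i:x'_i\in S'\}\subseteq V_{\x}$ and using the dictionary above yields $|T(S)|=|N(S')|<|S'|=|S|$, which is precisely a dilation in $\D(\bA,\bB)$.

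The only real subtlety I anticipate is bookkeeping: $T(S)$ is defined in terms of the in-neighbours in a directed graph while $N(S')$ is the neighbourhood in an undirected bipartite graph, and the edges in $\B(\bA,\bB)$ encode the reversed orientation relative to the usual convention. Once $N(S')=T(S)$ is pinned down, Hall's theorem does all the work. I would also note in passing that the sketch preceding the statement in the paper, which picks one unmatched $x'_k$ and asserts $|T(S)|=|U|$ for $S=U\cup\{x_k\}$, is not correct in general because $x_k$ may have predecessors outside the set of matching partners of $U$; invoking Hall's theorem sidesteps this gap cleanly.
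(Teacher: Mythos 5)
Your proof is correct, and for the converse direction it takes a genuinely different (and cleaner) route than the paper. The paper states this proposition as a citation of \cite{Ols:15} and only sketches an argument in the surrounding text: the forward direction (perfect matching implies no dilation) is the same counting argument you give, but for the converse the paper picks a maximum matching $M$, lets $U'$ be the set of matched vertices of $V_{\dx}$ and $x'_k$ an unmatched one, and asserts that $S=U\cup\{x_k\}$ satisfies $|T(S)|=|U|$. You instead invoke Hall's theorem, which hands you a deficient set $S'\subseteq V_{\dx}$ directly, after pinning down the dictionary $N(S')=T(S)$; that identification is right, since the bipartite neighbours of $x'_j$ are exactly the in-neighbours of $x_j$ in $\D(\bA,\bB)$. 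Your side remark that the paper's construction fails as stated is justified: take $V_{\x}=\{x_1,x_2\}$ with digraph edges $x_1\to x_1$ and $x_2\to x_1$ and no inputs, so that $x'_2$ is isolated in $\B(\bA,\bB)$. A maximum matching is $\{(x_1,x'_1)\}$, giving $U=\{x_1\}$ and $S=\{x_1,x_2\}$, yet $T(S)=\{x_1,x_2\}$ has cardinality $2=|S|$ rather than $|U|=1$; the actual dilation is $S=\{x_2\}$ with $T(S)=\emptyset$. The paper's sketch can be repaired by choosing $S'$ via the deficiency form of K\"onig/Hall (e.g., the right-vertices reachable from an unmatched vertex by $M$-alternating paths), which is precisely what Hall's theorem packages for you. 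In short: your approach buys rigor by outsourcing the combinatorics to a named theorem, whereas the paper's aims to be self-contained and elementary but, as written, has a genuine gap in the only-if direction.
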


\begin{figure}
\begin{subfigure}[b]{0.2\textwidth}
\centering
\definecolor{myblue}{RGB}{80,80,160}
\definecolor{mygreen}{RGB}{80,160,80}
\definecolor{myred}{RGB}{144, 12, 63}
\begin{tikzpicture} [scale = 0.2]
       \draw [thick] (-5,-5)  --   (5,-5);
       \draw [thick] (-5,-5)  --   (5,-10);
       
       \draw [thick] (-5,-7.5)  --   (5,-5);
       \draw [thick] (-5,-7.5)   --   (5,-7.5);
       \draw [thick] (-5,-7.5)  --   (5,-10);

       \draw [thick] (-5,-12.5)   --   (5,-10);
       \draw [thick] (-5,-12.5)  --   (5,-12.5);       
            
          \node at (-7,-5) {\small $x_1$};
          \node at (-7,-7.5) {\small $x_2$};
          \node at (-7,-10.0) {\small $x_3$};
          \node at (-7,-12.5) {\small $x_4$};
          \fill[myblue] (-5,-5) circle (30.0 pt);
          \fill[myblue] (-5,-7.5) circle (30.0 pt);
          \fill[myblue] (-5,-10) circle (30.0 pt);
          \fill[myblue] (-5,-12.5) circle (30.0 pt);
          \node at (7,-5) {\small $x'_1$};
          \node at (7,-7.5) {\small $x'_2$};
          \node at (7,-10) {\small $x'_3$};
          \node at (7,-12.5) {\small $x'_4$};
         
          \fill[mygreen] (5,-5) circle (30.0 pt); 
          \fill[mygreen] (5,-7.5) circle (30.0 pt);
          \fill[mygreen] (5,-10) circle (30.0 pt);
          \fill[mygreen] (5,-12.5) circle (30.0 pt);
          \node at (-5.0,-3) {$V_{\x}$};
          \node at (5,-3) {$V_{\dx}$};
\end{tikzpicture}
\caption{$\B(\bA)$}
\label{fig:bip1}
\end{subfigure}~\hspace{0 mm}
\begin{subfigure}[b]{0.2\textwidth}
\centering
\definecolor{myblue}{RGB}{80,80,160}
\definecolor{mygreen}{RGB}{80,160,80}
\definecolor{myred}{RGB}{144, 12, 63}
\begin{tikzpicture} [scale = 0.2]
       \draw [dashed] (-5,-5)  --   (5,-5);
       \draw [dashed] (-5,-5)  --   (5,-10);
       
       \draw [dashed] (-5,-7.5)  --   (5,-5);
       \draw [dashed] (-5,-7.5)   --   (5,-7.5);
       \draw [dashed] (-5,-7.5)  --   (5,-10);

       \draw [dashed] (-5,-12.5)   --   (5,-10);
       \draw [dashed] (-5,-12.5)  --   (5,-12.5);
       
       \draw [thick, myred] (-5,-15)  --   (5,-5);
       \draw [thick, myred] (-5,-15)  --   (5,-10);
       
       \draw [thick,myred] (-5,-17.5)  --   (5,-10);
       \draw [thick,myred] (-5,-17.5)   --   (5,-7.5);

       \draw [thick, myred] (-5,-20)   --   (5,-5);
       \draw [thick, myred] (-5,-20)   --   (5,-7.5);
       \draw [thick, myred] (-5,-20)  --   (5,-12.5);     
          
          \node at (-7,-5) {\small $x_1$};
          \node at (-7,-7.5) {\small $x_2$};
          \node at (-7,-10.0) {\small $x_3$};
          \node at (-7,-12.5) {\small $x_4$};
          \node at (-7,-15) {\small $u_1$};
          \node at (-7,-17.5) {\small $u_2$};
          \node at (-7,-20) {\small $u_3$};
          \fill[myblue] (-5,-5) circle (30.0 pt);
          \fill[myblue] (-5,-7.5) circle (30.0 pt);
          \fill[myblue] (-5,-10) circle (30.0 pt);
          \fill[myblue] (-5,-12.5) circle (30.0 pt);
          \fill[myblue] (-5,-15) circle (30.0 pt);
          \fill[myblue] (-5,-17.5) circle (30.0 pt);
          \fill[myblue] (-5,-20) circle (30.0 pt);
          \node at (7,-5) {\small $x'_1$};
          \node at (7,-7.5) {\small $x'_2$};
          \node at (7,-10) {\small $x'_3$};
          \node at (7,-12.5) {\small $x'_4$};
         
          \fill[mygreen] (5,-5) circle (30.0 pt); 
          \fill[mygreen] (5,-7.5) circle (30.0 pt);
          \fill[mygreen] (5,-10) circle (30.0 pt);
          \fill[mygreen] (5,-12.5) circle (30.0 pt);
          \node at (-5.0,-3) {$V_{\x} \cup V_{\u}$};
          \node at (5,-3) {$V_{\dx}$};
\end{tikzpicture}
\caption{$\B(\bA,\bB)$}
\label{fig:bip2}
\end{subfigure}
\caption{The bipartite graph representation of the structured system $(\bA,\bB)$ given in Figure \ref{fig:eg}.}
\label{fig:eg2}
\end{figure}
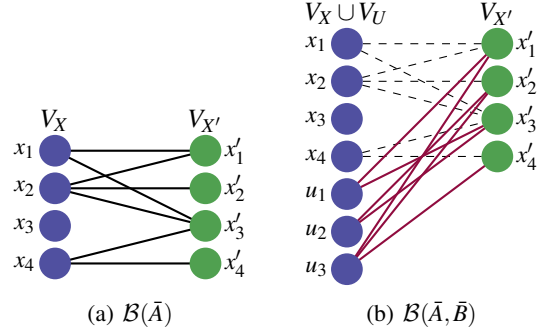

Therefore, a structured system is said to be controllable, if and only if all non-top linked SCC's get influenced by some input and there exists a perfect matching in the bipartite graph $\B(\bA, \bB)$. 
 Finding the non-top linked SCC's involve $O(n^2)$ computations and checking for existence of perfect matching involve $O(n^{2.5})$ computations \cite{CorLeiRivSte:01}. Thus structural controllability of a system can be accurately checked in $O(n^{2.5})$ computations. Using the two graph theoretical conditions explained in this section, we conclude that checking structural controllability of a system has polynomial complexity. However, these conditions do not give ample insight about solving minCCIS problem. In the next section, we give an alternate graph theoretical condition for checking structural controllability using flow networks. This condition will be subsequently used to provide approximation algorithms for minCCIS.

\section{Structural Controllability and Maximum Flow Problem}\label{sec:equi}
In this section, we establish a relation between the structural controllability and the maximum flow problem \cite{ForFul:55}. The maximum flow problem is a classical problem, where the objective is to find the maximum flow through a single source-sink flow network under certain capacity constraint. Here, given a flow network $\F$ with vertex set $V$, directed edge set $E$, source-sink pair $s,t$ and non-negative capacities $b(e)$ for every $e \in E$, we
define a flow vector $f$ as a function from the edge set $E$ to the set of non-negative real numbers $\R_+$.

\begin{defn}\label{def:feasible_flow}
In a flow network $\F$ with vertex and edge sets $V$ and $E$ respectively, a source-sink pair $s,t$ and non-negative edge capacities $b(e)$, a flow vector $f$ is said
to be feasible if (a)~$f(e) \leqslant b(e)$ for every $e \in E$, and (b)~$\sum_{ e=(u,v)\in E}f(e) = \sum_{ e'= (v,u)\in E}f(e')$ for every $v \in V \setminus \{ s, t\}$.
\end{defn}

The requirement (a) ((b), resp.) in Definition~\ref{def:feasible_flow} is called capacity
(flow conservation, resp.) constraint. Capacity constraint ensures that the flow through each edge is less than the edge capacity. The flow conservation constraint ensures that at every node, except the source and sink nodes, the flow leaving the node equals the flow entering the node. We define the flow from the source to the sink under a feasible flow vector $f$ as
\begin{align} \label{eq:flow}
\varphi_f = \sum_{e = (s,v) \in E}f(e).
\end{align}
The objective of a maximum flow problem is to find a feasible flow vector $f^\*$ such that $\varphi_{f^\*} \geqslant \varphi_f$
for any feasible flow vector $f$. It is a well studied problem and there exist many algorithms that find the maximum flow $f^\*$ in time polynomial in the number of nodes and edges of the flow network. For example, the algorithm in \cite{Orl:13} computes maximum flow in $O(|V||E|)$.

\begin{algorithm}[t]
  \caption{Pseudo code for constructing flow network of a structured system $(\bA, \bB)$
  \label{alg:SC to mfp}}
  \begin{algorithmic}
\State \textit {\bf Input:} Structured matrices $\bA \in \{0,\* \}^{n \times n}$ and $\bB \in \{ 0,\* \}^{n \times m}$
\end{algorithmic}
  \begin{algorithmic}[1]
  \State  Find non-top linked SCC's $\cN = \{\cN_i\}_{i=1}^q$ \label{step:find}
  \State Construct flow network $\FAB$ with vertex set $V_F$ and edge set $E_F$ as follows:  \label{step:graph}
  \State  $V_F \leftarrow \Big \{ \{s,t\} \cup \{\cN_i\}_{i=1}^q \cup \{x'_k\}_{k=1}^n \cup \{x_r\}_{r=1}^n \cup \{u_j\}_{j=1}^m \cup \{u'_j\}_{j=1}^m \Big \}$\label{step:vertex}
  \State $e \in E_F \leftarrow \begin{cases}
(s, \cN_i),~ {\rm for}~ i \in \{ 1,2,\ldots, q \},\\
(s, x'_k), ~ {\rm for}~ k \in \{ 1,2,\ldots, n\},\\
(\cN_i, u'_j), ~ \bB_{r,j} = \*~  \mbox{and}~ x_r \in \cN_i, \\
(x'_k, x_r),~\bA_{kr} \neq 0, \\
(x'_k, u_j), ~\bB_{kj} \neq 0,\\
(u_j, u'_j), ~ {\rm for}~j \in \{1,2,\ldots,m \},\\
(u'_j, t), ~ {\rm for}~ j \in \{1,2,\ldots,m \},\\
(x_r, t), ~ {\rm for}~ r \in \{1,2,\ldots,n \}.
\end{cases}$\label{step:edge}
\State $b(e) \leftarrow \begin{cases}
n + 1 , ~~ {\rm for}~ e = (u'_j,t), j \in \{1,2,\ldots,m\},\\
1,~~ \mbox{otherwise.} 
\end{cases}$ \label{step:capa}
\end{algorithmic}
\begin{algorithmic}
\State \textit{\bf Output:} Flow network $\FAB, s,t, b$
  \end{algorithmic}
\end{algorithm}
In order to establish a relation between these two problems, i.e., maximum flow and structural controllability, we first construct the flow network $\FAB$ corresponding to the given structured system $(\bA, \bB)$. The pseudo code for constructing the flow network $\FAB$ is presented in Algorithm \ref{alg:SC to mfp}. Given $(\bA, \bB)$, we first find the digraph $\D(\bA)$, the bipartite graph $\B(\bA, \bB)$ and the non-top linked SCC's in $\D(\bA)$, $\cN = \{ \cN_i \}_{i = 1}^q$ (see Step \ref{step:find}). Then we define the vertex set $V_F$ (see Step \ref{step:vertex}), edge set $E_F$ (see Step \ref{step:edge}), source-sink pair $s,t$ and capacity vector $b$ (see Step \ref{step:capa}) as shown in the algorithm\footnote{Note that even though $V_F$ and $E_F$ depend on $(\bA,\bB)$, we are not making the dependence explicit in our notations for brevity. We believe that $(V_F,E_F)$ can be obtained unambiguously given the context.}. The flow network $\FAB$ of system $(\bA, \bB)$ given in Figure \ref{fig:eg} is shown in Figure \ref{fig:flowgraph}. Note that in $\FAB$, Block-1 corresponds to the non-top linked SCC's in $\D(\bA)$ and Block-2 is the directed version of $\B(\bA, \bB)$ shown in Figure \ref{fig:bip2}. The flows entering Block-1 and Block-2 are defined as $\sum_{e \in \{(s, \cN_i)\}_{i=1}^q}f(e)$ and $\sum_{e \in \{(s, x'_k)\}_{k=1}^n}f(e)$ respectively. These flows are critically used in the sequel to ensure that no state is inaccessible and there are no dilations if the system $(\bA, \bB)$ is structurally controllable. In order to relate structural controllability and maximum flow problem we prove the following result.

\begin{figure}
\centering
\definecolor{myblue}{RGB}{80,80,160}
\definecolor{mygreen}{RGB}{80,160,80}
\definecolor{myred}{RGB}{144, 12, 63}
\definecolor{myyellow}{RGB}{214, 137, 16}
\begin{tikzpicture}[->,>=stealth',scale = 0.2]
   
       \draw [] (-16,-17.5)  ->   (-8,-7.5);
       \draw [] (-16,-17.5)  ->   (-7.8,-10.5);

       \draw [] (-7,-7.5)  ->   (9,-7.5);
       \draw [] (-7,-7.5)  ->   (9,-10);
       \draw [] (-7,-10.0)  ->   (9, -10);
       
       \draw [] (10,-3.5)   ->   (17, -20);
       \draw [] (10,-7.5)  ->   (17, -20);
       \draw [] (10,-10.0)  ->   (17, -20);
       
       \draw [] (3,-15)  ->   (9, -4.5);
       \draw [] (3,-17.5)  ->   (9,-7.5);
       \draw [] (3,-20)   ->   (9, -10);
       
       
          
          \node at (-18,-17.5) {\small $s$};
          \node at (20,-20) {\small $t$};
          \node at (-9,-7.0) {\small $\cN_1$};
          \node at (-9,-10.0) {\small $\cN_2$};
          \node at (12,-4.5) {\small $u'_1$};
          \node at (12,-7.5) {\small $u'_2$};
          \node at (12,-10.5) {\small $u'_3$};
          \fill[myred] (-16,-17.5) circle (30.0 pt);
          \fill[myred] (18,-20) circle (30.0 pt);
          \fill[mygreen] (-7,-7.5) circle (30.0 pt);
          \fill[mygreen] (-7,-10) circle (30.0 pt);
          \fill[myblue] (10,-4.5) circle (30.0 pt);
          \fill[myblue] (10,-7.5) circle (30.0 pt);
          \fill[myblue] (10,-10.5) circle (30.0 pt);           
          \node at (4,-15) {\small $u_1$};
          \node at (4,-17.5) {\small $u_2$};
          \node at (4,-20) {\small $u_3$};

          \fill[myblue] (2,-15) circle (30.0 pt);
          \fill[myblue] (2,-17.5) circle (30.0 pt);
          \fill[myblue] (2,-20) circle (30.0 pt);
 

       \draw [] (-15.5,-18.3)  ->   (-8,-22.5);
       \draw [] (-15.5,-18.3)  ->   (-8,-25);
       \draw [] (-15.5,-18.3)  ->   (-8,-27.5);
       \draw [] (-15.5,-18.3)  ->   (-8,-30);
       
       \draw [] (-7,-22.5)   ->   (1,-15);
       \draw [] (-7,-27.5)  ->   (1,-15);
       \draw [] (-7,-25)   ->   (1,-17.5);
       \draw [] (-7,-27.5)  ->   (1,-17.5); 
       \draw [] (-7,-25)   ->   (1,-20);
       \draw [] (-7,-30)  ->   (1,-20);

       \draw [] (-7,-22.5)  ->   (1,-22.5);
       \draw [] (-7,-22.5)  ->   (1, -25);
       
       \draw [] (-7,-25.0)  ->   (1,-25);
       \draw [] (-7,-27.5)  ->   (1, -22.5);
       \draw [] (-7,-27.5)   ->   (1, -25);
       \draw [] (-7,-27.5)  ->   (1, -30);
       
       \draw [] (-7,-22.5)  ->   (1, -20);
       \draw [] (-7,-30)   ->   (1, -30);
       
       \draw [] (3,-22.5)  ->   (17,-20);
       \draw [] (3,-25)  ->   (17,-20);
       \draw [] (3,-27.5)  ->   (17,-20);
       \draw [] (3,-30)  ->   (17,-20);     
          
          
          \node at (-9,-22.5) {\small $x'_1$};
          \node at (-9,-25) {\small $x'_2$};
          \node at (-9,-27.5) {\small $x'_3$};
          \node at (-9,-30) {\small $x'_4$};
          
          
          \fill[mygreen] (-7,-22.5) circle (30.0 pt);
          \fill[mygreen] (-7,-25) circle (30.0 pt);
          \fill[mygreen] (-7,-27.5) circle (30.0 pt);
          \fill[mygreen] (-7,-30) circle (30.0 pt);
          \node at (4,-22.5) {\small $x_1$};
          \node at (4,-25) {\small $x_2$};
          \node at (4,-27.5) {\small $x_3$};
          \node at (4,-30) {\small $x_4$};
          \fill[myblue] (2,-22.5) circle (30.0 pt);
          \fill[myblue] (2,-25) circle (30.0 pt);
          \fill[myblue] (2,-27.5) circle (30.0 pt); 
          \fill[myblue] (2,-30) circle (30.0 pt);

          
\draw[dashed] (-13,-12) rectangle (15,3);
\node at (10,1) {Block-1};
\node (rect) at (0.9,-22) [draw,dashed,minimum width=5.65cm,minimum height=3.75cm] {};
\node at (10,-30) {Block-2};
\end{tikzpicture} 
\caption{The flow network $\FAB$ for the structured system $(\bA,\bB)$ given in Figure \ref{fig:eg}.}
\label{fig:flowgraph}
\end{figure}

\begin{theorem}\label{th:maxflow}
Consider a structured system $(\bA, \bB)$. Let $n$ denote the number of states in the system and $q$ denote the number of non-top linked SCC's in $\D(\bA)$. Then, $(\bA, \bB)$ is structurally controllable if and only if the maximum flow $\varphi_{f^\*}$ in the flow network $\FAB$ is atleast $q+n$.
\end{theorem}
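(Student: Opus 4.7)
The plan is to tie the condition $\varphi_{f^\*}\ge q+n$ to the two standard conditions of Lin's theorem (Proposition~\ref{prop:lin})---accessibility of every state and absence of dilations, the latter equivalent by Proposition~\ref{prop:dil} to existence of a perfect matching in $\B(\bA,\bB)$ saturating $V_{\dx}$. The preliminary observation I will record is that the cut $\{s\}$ has capacity exactly $q+n$ in $\FAB$: one unit-capacity edge $(s,\cN_i)$ per non-top linked SCC and one $(s,x'_k)$ per primed state. Hence $\varphi_{f^\*}\le q+n$ always, and the hypothesis amounts to saturating every edge out of $s$. Because all capacities are integer, I may work with an integral max flow and assume each unit of flow travels along a single $s$-to-$t$ path.

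For the forward direction $(\Rightarrow)$, assuming $(\bA,\bB)$ structurally controllable I will build an integer feasible flow of value $q+n$. Proposition~\ref{prop:dil} supplies a perfect matching $M$ of $\B(\bA,\bB)$; for each $x'_k$, route one unit along $s\to x'_k\to x_r\to t$ if $M$ matches $x'_k$ to some state $x_r$, otherwise along $s\to x'_k\to u_j\to u'_j\to t$ if $M$ matches $x'_k$ to some input $u_j$. The matching property guarantees each unit-capacity edge $(x_r,t)$ and $(u_j,u'_j)$ is used at most once. For accessibility, since every state must be reachable from an input, each non-top linked $\cN_i$ contains a state directly influenced by some input $u_{j(i)}$; route one further unit along $s\to\cN_i\to u'_{j(i)}\to t$ for each $i=1,\dots,q$. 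The only non-trivial capacity check is at $(u'_j,t)$: it carries at most one matching unit (via $u_j$) plus at most one unit from each of the $q$ SCCs, a total of at most $q+1\le n+1$, which its capacity $n+1$ can absorb.

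For the reverse direction $(\Leftarrow)$, given an integral $f^\*$ with $\varphi_{f^\*}=q+n$, every source edge carries one unit and I will read off the two structural conditions from the two blocks. In Block-1, the unit entering $\cN_i$ must exit along some $(\cN_i,u'_j)$, which by construction witnesses an input $u_j$ directly influencing some $x_r\in\cN_i$; since every SCC is reachable from some non-top linked SCC in the condensation DAG, every state is accessible. In Block-2, the unit entering $x'_k$ exits along exactly one of $(x'_k,x_r)$ or $(x'_k,u_j)$; the capacity-$1$ bottlenecks $(x_r,t)$ and $(u_j,u'_j)$ forbid any two distinct $x'_k$'s from sharing the same endpoint, so the assignment $x'_k\mapsto v$ is an injection and thus a perfect matching of $V_{\dx}$ in $\B(\bA,\bB)$. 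Proposition~\ref{prop:dil} then rules out dilations, and Proposition~\ref{prop:lin} concludes structural controllability.

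The step I expect to be the main obstacle is the capacity accounting at $(u'_j,t)$ in the forward direction: accessibility units from distinct SCCs can all pile onto the same $u'_j$ that simultaneously carries a matching unit through $u_j$. The choice $b(u'_j,t)=n+1$ in Algorithm~\ref{alg:SC to mfp} is exactly tuned to this worst case via $q\le n$, and noting that Block-1 paths avoid $(u_j,u'_j)$ while Block-2 matching paths avoid $(\cN_i,u'_j)$ rules out conflicts on any unit-capacity internal edge.
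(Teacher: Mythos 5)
Your proposal is correct and follows essentially the same route as the paper's proof: the forward direction builds a flow of value $q+n$ from a perfect matching of $\B(\bA,\bB)$ plus one greedy input per non-top linked SCC, and the reverse direction reads accessibility and a perfect matching off an integral saturating flow via flow conservation and the unit capacities at the $x_r$ and $u_j$ nodes. Your added observation that the cut at $s$ bounds $\varphi_{f^\*}\leqslant q+n$ (so the hypothesis forces saturation of every source edge) is implicit in the paper but stated more cleanly by you; otherwise the arguments, including the capacity accounting at $(u'_j,t)$, coincide.
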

\begin{proof}
Recall integrality theorem in maximum flow which states that if all capacities in a flow network are integers, then there exists an integer maximum flow solution \cite{ForFul:55}. Since $b(e)~\in~\mathbb{Z}_+$, where $\mathbb{Z}$ is the set of all integers, for all $e \in E_F$, without loss of generality we assume that the optimal flow vector $f^\*$  
is an integer valued function from $E_F$. 
We will use this in proving both the if and only if parts of the theorem.

{\bf If part:} We prove that if $(\bA, \bB)$ is structurally controllable, then the maximum flow is atleast $q+ n$, i.e., $\varphi_{f^\*} \geqslant q+n$. Assume $(\bA, \bB)$ is structurally controllable. Then by Proposition \ref{prop:lin} and Proposition \ref{prop:dil}, all the states are accessible and there exists a perfect matching in $\B(\bA, \bB)$. All states being accessible implies that all \mbox{non-top} linked SCC's are connected to some input vertex. Denote by $u({\cN_i})$ an input that connects to some state in a non-top linked SCC $\cN_i$. There can be many inputs connecting to a vertex in $\cN_i$, we can choose anyone of them as $u(\cN_i)$.  Furthermore, since  $\B(\bA, \bB)$ has a perfect matching, say $M$, for every vertex $x'_k$ there exist a unique $y_k \in V_X \cup V_U$ such that $(x'_k,y_k) \in M$. The uniqueness of $y_k$'s ensure that $y_{k_1} = y_{k_2}$ only if
$k_1 = k_2$. Now, we construct a feasible flow vector $f$ for the flow network $\F(V_F,E_F)$ such that $\varphi_f = q + n$. This proves the required as $\varphi_{f^\*} \geqslant \varphi_f$. 

Construct a flow vector $f$ in $\FAB$ as follows:

\noindent
1.~$f((s,v)) = 1$ for every $v \in \{\cN_1,\ldots,\cN_q\}\cup\{x'_1,\ldots,x'_n\}$,

\noindent
2.~$f((\cN_i,u'({\cN_i}))) = 1$ for every $i \in \{1,\ldots,q\}$,

\noindent
3.~$f((x'_k,y_k))=1$, for every $k\in \{1,\ldots,n\}$,

\noindent
4.~if $y_i \in V_U$, then $f((y_i,y'_i)) = 1$, else $f(y_i,t) = 1$, and

\noindent
5.~$f((u'_j,t)) = |\{i:u_j = u(\cN_i)\}| + f((u_j,u'_j))$, where $|D|$ denotes the cardinality of set $D$.

From the construction step~1 and equation \eqref{eq:flow}, it follows that $\varphi_f = q+n$. Thus, it suffices to show
that the flow vector $f$ is feasible. First, we show that $f$ satisfies capacity constraint. Note from the construction steps~1 to~4, each edge except that emanating from nodes $u'_j$'s have unit flow. Thus, for these edges, we need to argue that they belong to $E_F$. 
Recall Algorithm~\ref{alg:SC to mfp}. Clearly, the edges in construction step~1 are in $E_F$. As defined above $u(\cN_i)$ denotes an input that connects to a vertex in $\cN_i$. Thus, edges $(\cN_i, u'(\cN_i))$ considered in construction step~2 are in $E_F$.
Recall that $y_k$ is defined so that $(x_k,y_k)$ is an edge in a perfect matching of $\B(\bA,\bB)$. Thus, the edges considered in step~3 and~4 also belong in $E_F$.

Now, we show that capacity constraint is satisfied. Note that the capacity $b(e)$ is at least one for every $e\in E_F$. Thus, if suffices to show that the capacity constraint is satisfied for the edges $(u'_j,t)$ as flow through all other edges is at most one.  
Edges $(u'_j,t)$ considered in construction step~5 are shown to be in $E_F$. By construction of $\FAB$, each of these edges has capacity $n+1$.
Since $q \leqslant n$ and $u'_j$ can have unit capacity incoming edges only from $u_j$ and $\cN_1,\ldots,\cN_q$, thus 
the total flow coming in $u'_j$ is bounded above by $n+1$. This concludes that $f$ satisfies capacity constraint.  

To see that $f$ satisfies flow conservation constraint, note that the flow being pushed in construction step~1 is pushed out in construction steps~2 and~3, subsequently this flow is further pushed to the sink $t$ in steps~4 and~5. Thus, $f$ is feasible, proving the required.

{\bf Only if part:} Here, we show that if $\varphi_{f^\*} \geqslant q+n$, then the system $(\bA,\bB)$ is structurally controllable. To establish the required, we show that when $\varphi_{f^\*} \geqslant q+n$, then both accessibility and no dilation conditions required for structural controllability are satisfied (recall Proposition~\ref{prop:lin}).

Let us assume $\varphi_{f^\*} \geqslant q+n$ in the flow network $\FAB$. Since there are exactly $q+n$ edges, each with capacity one, emanating from the source vertex $s$,
each of these edges should carry one unit flow. Since $f^\*$ is a feasible flow vector, it satisfies flow conservation at each node in $V_F \setminus \{s,t\}$. Specifically, the flow conservation is satisfied at nodes $\cN_1,\ldots,\cN_q$ and
$x'_1,\ldots,x'_n$. Thus, for every $\cN_i$, there exist
$u'_j$ such that $(\cN_i,u'_j)\in E_F$. By construction of
the flow network $\FAB$,  $(\cN_i,u'_j)\in E_F$ if the input $u_j$ connects to some state $x \in \cN_i$. Thus, all non-top linked SCC's are connected to atleast one input. This ensures that all states are accessible.

Furthermore, on account of flow conservation at nodes $x'_1,\ldots,x'_n$ and flow  integrality, there exists $y_k \in V_X \cup V_U$ such that $f^\*((x'_k,y_k))=1$. Since the capacity of outgoing edges from each node in $V_X \cup V_U$ is one, it follows that $y_{k_1} \not= y_{k_2}$ unless $k_1 = k_2$. Now, note that the set
$\{(x'_k,y_k):k=1,\ldots,n\}$ is a matching in $\B(\bA,\bB)$.
This proves the required using Propositions~\ref{prop:lin} and~\ref{prop:dil}. This completes the proof.
\end{proof}

Following result is an immediate consequence of Theorem~\ref{th:maxflow}.

\begin{cor}\label{cor:subset_sel}
Consider $\FAB$ and any feasible flow vector $f$ such that
$\varphi_f \geqslant q+n$ and define 
$\W_f = \{j: f(u'_j,t) >0\}$. Then, the structured system
$(\bA,\bB_{\W_f})$ is structurally controllable. 
\end{cor}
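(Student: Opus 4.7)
The plan is to invoke Theorem~\ref{th:maxflow} applied to the reduced structured system $(\bA, \bB_{\W_f})$. Since the state matrix $\bA$ is unchanged, the number of non-top linked SCC's in $\D(\bA)$ remains $q$ and the number of states remains $n$. So it suffices to exhibit a feasible flow vector in $\F(\bA, \bB_{\W_f})$ of value at least $q+n$, and the natural candidate is the restriction of $f$ itself.

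First I would compare the two flow networks $\F(\bA,\bB)$ and $\F(\bA, \bB_{\W_f})$. By the construction in Algorithm~\ref{alg:SC to mfp}, the latter is obtained from the former by deleting, for each $j \notin \W_f$, the vertices $u_j, u'_j$ together with every edge incident to them, namely $(x'_k, u_j)$, $(\cN_i, u'_j)$, $(u_j, u'_j)$, and $(u'_j, t)$. All other vertices, edges, and capacities are identical in the two networks.

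Next I would argue that no flow of $f$ traverses any of the deleted edges. Fix $j \notin \W_f$, so that $f(u'_j, t) = 0$ by definition of $\W_f$. Apply the flow conservation constraint of Definition~\ref{def:feasible_flow} at vertex $u'_j$: the only outgoing edge is $(u'_j, t)$, so the total incoming flow at $u'_j$ is zero. Since the capacity constraint forces flows to be non-negative, this forces $f(u_j, u'_j) = 0$ and $f(\cN_i, u'_j) = 0$ for every $i$. Applying flow conservation again at $u_j$, whose only outgoing edge is $(u_j, u'_j)$, yields $f(x'_k, u_j) = 0$ for all $k$. Thus every edge removed in passing from $\F(\bA, \bB)$ to $\F(\bA, \bB_{\W_f})$ carries zero flow under $f$.

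Consequently, the restriction $f'$ of $f$ to the edge set of $\F(\bA, \bB_{\W_f})$ is a feasible flow vector (capacity constraints are inherited unchanged, and flow conservation at every remaining vertex is unaffected because only zero-flow incident edges were dropped), and its value equals $\varphi_{f'} = \sum_{v}f'(s,v) = \varphi_f \geqslant q + n$ because no $(s, \cdot)$ edge was deleted. Applying Theorem~\ref{th:maxflow} to $(\bA, \bB_{\W_f})$ then yields structural controllability. The only subtlety, and the one step worth stating carefully, is the twofold application of flow conservation at $u'_j$ and at $u_j$ that propagates the vanishing of $f(u'_j, t)$ backward through the two-layer input gadget; the rest is bookkeeping.
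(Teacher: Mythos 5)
Your proof is correct and follows the same route as the paper, which simply cites Theorem~\ref{th:maxflow} together with the observation that the maximum flow through $\F(\bA,\bB_{\W_f})$ is $q+n$. Your backward propagation of zero flow through $u'_j$ and $u_j$ via flow conservation is exactly the detail needed to justify that observation, so you have in effect written out the argument the paper leaves implicit.
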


\begin{proof}
The result follows from Theorem~\ref{th:maxflow} and an observation that the maximum flow through $\F(\bA,\bB_{\W_f})$ is $q+n$.
\end{proof}

\begin{rem}
The above result allows for obtaining a subset of all possible inputs that are enough to retain controllability of the structured system from the obtained flow vector. 
Conversely, the structural controllability of the system
with a given subset of inputs, say $\W$, can be checked using Theorem~\ref{th:maxflow} for flow network $\F(\bA,\bB_{\W})$. \end{rem}   

In the following lemma, we show that $\FAB$ can be constructed in polynomial time. 
\begin{lem}\label{lem:flowgraph}
Constructing the flow network $\FAB$ corresponding to a structured system $(\bA, \bB)$ has complexity $O(n^2)$.
\end{lem}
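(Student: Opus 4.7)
The plan is to walk through each step of Algorithm~\ref{alg:SC to mfp} in order and show that each step costs at most $O(n^2)$, then sum the contributions.

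First I would handle Step~\ref{step:find}. The state digraph $\D(\bA)$ can be read off from $\bA$ in $O(n^2)$ time (one pass over the $n^2$ entries). Its SCCs can then be computed by a standard linear-time algorithm (e.g.\ Tarjan's) in $O(|V_\x|+|E_\x|) = O(n+n^2) = O(n^2)$ time. To identify which SCCs are \emph{non-top linked} in the sense of Definition~\ref{def:scc}, it suffices to form the condensation DAG and, for each SCC, check whether any of its vertices has an incoming edge originating in a different SCC; since each edge of $\D(\bA)$ is inspected $O(1)$ times, this adds only $O(n^2)$ work. In particular $q \leqslant n$.

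Next I would bound the sizes produced in Steps~\ref{step:vertex}--\ref{step:capa}. The vertex set has cardinality $|V_F| = 2 + q + n + n + m + m = O(n+m)$, so it can be listed in time $O(n+m)$. The edges fall into the eight cases of Step~\ref{step:edge}; I would tabulate them:
\begin{itemize}
\item $(s,\cN_i)$, $(s,x'_k)$ and $(x_r,t)$: at most $n$ edges each.
\item $(u_j,u'_j)$ and $(u'_j,t)$: at most $m$ edges each.
\item $(x'_k,x_r)$: one per nonzero entry of $\bA$, so at most $n^2$.
\item $(x'_k,u_j)$: one per nonzero entry of $\bB$, so at most $nm$.
\item $(\cN_i,u'_j)$: at most one per nonzero entry of $\bB$ restricted to a state in some non-top linked SCC, hence at most $nm$.
\end{itemize}
All these are produced by scanning $\bA$ and $\bB$ once, together with the already-computed SCC labels, in time proportional to the number of edges generated. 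Step~\ref{step:capa} simply assigns a capacity from $\{1,\,n+1\}$ to each edge, at cost $O(|E_F|)$.

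Summing, the total complexity is $O(n^2 + nm)$. The only subtle point — and the main obstacle if one wants a clean $O(n^2)$ bound — is the dependence on $m$. In the setting of minCCIS, the columns of $\bB$ that are not covered by any useful matching or SCC hookup can be discarded, and without loss of generality one may assume $m \leqslant n$ (an assumption consistent with the rest of the paper, where each input is used to actuate at least one state and the problem is to shrink the input set). Under $m=O(n)$ the bound $O(n^2+nm)$ collapses to $O(n^2)$, giving the claim. I would conclude the proof by stating this explicitly; if one prefers not to bound $m$, the same argument yields the sharper statement $O(n^2 + nm)$, of which $O(n^2)$ is the special case relevant for the rest of the paper.
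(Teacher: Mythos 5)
Your proof is correct and follows essentially the same route as the paper's: a linear-time SCC computation on $\D(\bA)$ followed by a direct count of the vertices and edges produced by Algorithm~\ref{alg:SC to mfp}. You are in fact more careful than the paper, which silently absorbs the $O(nm)$ contribution from the $\bB$-derived edges into "linear complexity"; the assumption $m=O(n)$ you invoke to collapse $O(n^2+nm)$ to $O(n^2)$ is exactly the one the paper itself uses elsewhere (e.g., in the proof of Lemma~\ref{lem:two_stage_complexity}), so your accounting is consistent with, and slightly sharper than, the published argument.
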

\begin{proof}
Given the state digraph $\D(\bA) = \D(V_{\x}, E_{\x})$, the non-top linked SCC's can be found in $O(|V_{\x}| + |E_{\x}|)$ computations and the rest of the constructions in Algorithm \ref{alg:SC to mfp} are of linear complexity. Here, $|V_{\x}| = n$ and $|E_{\x}|$ is atmost $|V_{\x}|^2$. However, in large systems the state matrix is sparse and hence the number of edges in the state digraph is much less than the above bound. Thus the construction of the flow network $\FAB$ has complexity $O(n^2)$. This completes the proof.
\end{proof}

In the next result, we formally state the complexity of
checking structural controllability of $(\bA,\bB)$ using flow network $\FAB$.

\begin{lem}\label{lem:comp}
Checking structural controllability of $(\bA,\bB)$ using the maximum flow formulation has complexity $O(n^3)$.
\end{lem}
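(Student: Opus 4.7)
The plan is to combine the construction cost from Lemma~\ref{lem:flowgraph} with a bound on the size of $\FAB$ and invoke a standard polynomial maximum-flow subroutine, then finish by comparing the computed flow to the threshold $q+n$ supplied by Theorem~\ref{th:maxflow}.

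First, I would construct $\FAB$ using Algorithm~\ref{alg:SC to mfp}; by Lemma~\ref{lem:flowgraph} this costs $O(n^2)$. Next, I would count vertices and edges of $\FAB$. The vertex set $V_F = \{s,t\}\cup\{\cN_i\}_{i=1}^q \cup \{x'_k\}_{k=1}^n \cup \{x_r\}_{r=1}^n \cup \{u_j\}_{j=1}^m \cup \{u'_j\}_{j=1}^m$ has cardinality $O(n+m)$, and since at most $n$ inputs are needed to retain controllability we may take $m=O(n)$, so $|V_F|=O(n)$. For the edge set, the $(s,\cN_i)$, $(s,x'_k)$, $(u_j,u'_j)$, $(u'_j,t)$ and $(x_r,t)$ families contribute $O(n)$ edges in total, while $(x'_k,x_r)$ and $(x'_k,u_j)$ account for the non-zero entries of $\bA$ and $\bB$ (at most $n^2$ and $nm$, respectively) and $(\cN_i,u'_j)$ adds at most $qm$. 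Thus $|E_F|=O(n^2)$.

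Having these bounds, I would invoke a polynomial-time maximum-flow routine on $\FAB$. Using Orlin's algorithm \cite{Orl:13}, the maximum flow can be computed in $O(|V_F|\,|E_F|)=O(n\cdot n^2)=O(n^3)$ time. Alternatively, since Theorem~\ref{th:maxflow} ensures we need only verify $\varphi_{f^\*} \geqslant q+n = O(n)$ and all capacities are integral with the total max flow value bounded by $O(n)$, an augmenting-path routine of complexity $O(|E_F|\cdot\varphi_{f^\*})=O(n^2\cdot n)=O(n^3)$ also suffices. Finally, checking $\varphi_{f^\*} \geqslant q+n$ is $O(1)$ once the flow value is known, and by Theorem~\ref{th:maxflow} this certifies structural controllability of $(\bA,\bB)$.

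Adding the construction cost $O(n^2)$ from Lemma~\ref{lem:flowgraph} to the max-flow computation $O(n^3)$ and the $O(1)$ comparison yields an overall complexity of $O(n^3)$, as claimed. The main (and essentially only non-routine) step is the edge-count bookkeeping, since once $|V_F|$ and $|E_F|$ are established the result follows directly from off-the-shelf max-flow complexity; so I do not expect any substantive obstacle beyond this accounting.
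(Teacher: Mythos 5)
Your proposal is correct and follows essentially the same route as the paper: build $\FAB$ in $O(n^2)$ via Lemma~\ref{lem:flowgraph}, bound $|V_F|=O(n)$ and $|E_F|=O(n^2)$, run an $O(|V_F||E_F|)$ max-flow routine, and compare the result to $q+n$ via Theorem~\ref{th:maxflow}. The only additions are your explicit edge-count bookkeeping and the alternative augmenting-path bound, which the paper omits but which do not change the argument.
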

\begin{proof}
Given a structured system $(\bA, \bB)$, we know by Lemma \ref{lem:flowgraph} that complexity involved in the construction of the flow network $\FAB$ is $O(n^2)$. Finding the maximum flow in $\FAB$ has $O(|V_F||E_F|)$ computations. In the flow network corresponding to $(\bA, \bB)$, $|V_F| = O(n)$ and $|E_F| = O(n^2)$. Thus the maximum flow in $\FAB$ can be found in $O(n^3)$ computations. Thus, using Lemma \ref{lem:flowgraph} and Theorem \ref{th:maxflow}, structural controllability can be checked accurately in $O(n^3)$ computations. This completes the proof.
\end{proof}

\begin{rem}
Checking structural controllability of $(\bA, \bB)$ using the two conditions available previously in literature (i.e., SCC's and bipartite matching) has complexity $O(n^{2.5})$ and the maximum flow condition given in this paper has complexity $O(n^3)$.
\end{rem}

Using Theorem \ref{th:maxflow} and Lemma \ref{lem:comp} we infer that maximum flow problem gives another necessary and sufficient graph theoretic condition for checking structural controllability. Also, computational complexity in checking structural controllability using this condition is polynomial. The flow network constructed above is useful in two ways: (a) for checking if a given system is structurally controllable and (b) to optimize the input set for solving\remove{ minCIS and} minCCIS. The maximum flow formulation only caters (a). Our aim in this paper is to optimize the number of inputs for minCCIS. Thus we need to augment the maximum flow formulation to cater our optimization endeavour. To this end, we introduce a variant of the maximum flow problem with some additional features, called the minimum cost fixed flow problem. 
\section{Minimum Controllability Problems as Minimum Cost Fixed Flow Problem}\label{sec:mcff}
Flow networks can be used to determine structural controllability of $(\bA, \bB)$. However, the maximum flow problem may not solve minCCIS. In this section, we augment the flow network with a cost for edge usage in order to solve minCCIS. Specifically, we show that solving a minimum cost fixed flow (MCFF) problem (see for example \cite{KruNolSchWirRav:99}) in a flow network we design is equivalent to solving minCCIS. In this section, we first describe MCFF for completeness and subsequently demonstrate its utility for solving minCCIS.

Now we describe the minimum cost fixed flow problem (MCFF). Input to an MCFF problem is a directed flow network $\F(V,E)$, with vertex set $V$, edge set $E$, specified vertices $s,t$, non-negative capacities $b(e)$, non-negative costs $c(e)$ for edges $e \in E$ and flow requirement $\pl$. Then, the solution to MCFF($\pl$) is a feasible flow vector $f^\*_M$ such that $\varphi_{f^\*_M} \geqslant \pl$ and $\sum_{e \in E:f^\*_M(e) > 0} c(e) \leqslant \sum_{e \in E:f(e) > 0} c(e)$, for any feasible flow vector $f$. Thus, MCFF($\pl$) solves the following constrained optimization:

\begin{prob}\label{prob:LP}
Minimize: $\sum_{e \in E:f(e) > 0} c(e)$\\
Subject to:\\
(1)~$f$ is a feasible flow vector, and\\
(2)~$\varphi_{f} \geqslant \pl$.
\end{prob}

Any feasible solution of Problem~\ref{prob:LP} is referred to as a feasible solution to MCFF($\pl$).
Note that MCFF($\pl)$ has a feasible solution if and only if $\varphi_{f^\*} \geqslant \pl$. 
MCFF is a well studied NP-hard problem \cite{GarJoh:02}. 

To establish a relation between MCFF and minCCIS, we formulate minCCIS as an instance of MCFF such that an optimal solution $f^\*_M$ to MCFF corresponds to an optimal solution to minCCIS. 
Given a structured system $(\bA,\bB)$ and a cost vector $p_u$, such that each entry $p_u(j)$, for $j = 1,2,\ldots,m$, corresponds to the cost associated with each input, we consider flow network $\FAB$ augmented with cost vector c (referred to as $\Fc$ in the sequel) as follows:
\begin{equation}\label{eq:cost}
c(e) \leftarrow \begin{cases}
p_u(j), ~~ {\rm for}~ e = (u'_j,t), j \in \{1,2,\ldots,m\},\\
0,~~~ \mbox{otherwise.}
\end{cases} 
\end{equation}
On this flow network, we solve MCFF($q+n$). 
We have the following preliminary result.
\begin{lem}\label{lem:mcff_feas}
A structured system $(\bA,\bB)$ is structurally controllable if and only if MCFF($q+n$) has a feasible solution on $\Fc$.
\end{lem}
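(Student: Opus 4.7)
The plan is to reduce the lemma directly to Theorem~\ref{th:maxflow}. The key observation is that $\Fc$ and $\FAB$ share identical vertex sets, edge sets, and edge capacities; the cost vector $c$ defined in~\eqref{eq:cost} only enters the objective of the MCFF problem, not the feasibility constraints. Consequently, a flow vector $f$ is feasible (in the sense of Definition~\ref{def:feasible_flow}) in $\Fc$ if and only if it is feasible in $\FAB$, and the source-to-sink flow value $\varphi_f$ defined in~\eqref{eq:flow} is identical in both networks.

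With this observation, the plan is as follows. First I would argue the forward direction: if $(\bA,\bB)$ is structurally controllable, then by Theorem~\ref{th:maxflow} the maximum flow $\varphi_{f^\*}$ on $\FAB$ satisfies $\varphi_{f^\*} \geqslant q+n$; the maximum flow vector $f^\*$ is itself a feasible flow vector, and since feasibility is unchanged in $\Fc$, the same $f^\*$ satisfies the constraints of Problem~\ref{prob:LP} with threshold $\pl = q+n$. Hence MCFF($q+n$) admits a feasible solution.

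Conversely, if MCFF($q+n$) has a feasible solution $f$ on $\Fc$, then $f$ is in particular a feasible flow vector in $\FAB$ with $\varphi_f \geqslant q+n$. Therefore the maximum flow $\varphi_{f^\*}$ in $\FAB$ satisfies $\varphi_{f^\*} \geqslant \varphi_f \geqslant q+n$, and another application of Theorem~\ref{th:maxflow} gives structural controllability of $(\bA,\bB)$.

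There is no genuine obstacle here; the lemma is essentially a rephrasing of Theorem~\ref{th:maxflow} once one notices that adding costs to edges does not alter the feasibility region of the flow problem. The only care needed is to explicitly separate the roles of (i) the capacity/conservation constraints, which are shared between $\FAB$ and $\Fc$, and (ii) the cost function, which only appears in the MCFF objective and is irrelevant for the existence of a feasible solution.
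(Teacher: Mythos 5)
Your proposal is correct and follows essentially the same route as the paper's own proof: both directions invoke Theorem~\ref{th:maxflow} together with the observation that the cost vector $c$ affects only the MCFF objective, so feasibility of a flow vector is the same in $\Fc$ as in $\FAB$. Your explicit remark that the two networks share vertices, edges, and capacities is a slightly more careful statement of what the paper leaves implicit, but the argument is the same.
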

\begin{proof}
{\bf If part:} Here we will prove that if $(\bA,\bB)$ is structurally controllable, then $MCFF(q+n)$ has a feasible solution on $\Fc$. By Theorem \ref{th:maxflow} we know that if $(\bA,\bB)$ is structurally controllable, then the maximum flow through the network $\FAB$ is greater than or equal to $q+n$. Thus there exists a feasible flow vector $f$ of $\Fc$ such that $\varphi_f \geqslant q+n$. Thus $f$ is indeed a feasible solution to MCFF($q+n$). This completes the if part.

{\bf Only if part:} Here we will prove that if there exists a feasible solution to MCFF($q+n$) on $\Fc$, then the structured system $(\bA, \bB)$ is structurally controllable. A feasible solution to MCFF($q+n$) is a feasible flow vector $f$ such that $\varphi_f \geqslant q+n$. Since $\varphi_f \geqslant q+n$, the maximum flow vector $f^\*$ in $\Fc$ will give $\varphi_f^\* \geqslant q+n$. Thus by Theorem \ref{th:maxflow}, the structured system $(\bA, \bB)$ is structurally controllable.
\end{proof}

Henceforth, we consider a structurally controllable system $(\bA, \bB)$ with $n$ states and $q$ number of non-top linked SCC's in $\D(\bA)$. Let $f$ be
any feasible solution to MCFF($q+n$). Define,
\begin{align}
\I_f & := \{ j: f(u'_j,t) > 0 \}, \mbox{ and} \label{eq:IF}\\
c_f & := \sum_{e: f(e) > 0} c(e). 
\label{eq:CF}
\end{align}
Also define $c^\* = c_{f^\*_M}$ as the optimal cost for MCFF($q+n$) on $\Fc$.
Using \eqref{eq:IF} and~\eqref{eq:CF}, we now describe how a solution to minCCIS can be obtained from a feasible solution $f$ of MCFF($q+n$). For a given $f$, we propose to use inputs $u_j$ only if $j \in \I_f$. Following result holds.
\begin{lem}\label{lem:solution}
If $f$ is a feasible solution to MCFF($q+n$) on $\Fc$, then $(\bA,\bB_{\I_f})$ is structurally controllable and
$p(\I_f) = c_f$.
\end{lem}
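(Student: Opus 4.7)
The proposal is to dispatch both claims very directly using Corollary \ref{cor:subset_sel} and the definition of the cost function $c$ in \eqref{eq:cost}, so no new machinery is required.

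For the structural controllability claim, I would observe that the underlying directed graph of $\Fc$ is identical to $\FAB$ (the only difference is the additional edge-cost labelling in \eqref{eq:cost}), and the notions of capacity and feasibility do not depend on $c$. Hence any feasible solution $f$ to MCFF($q+n$) on $\Fc$ is also a feasible flow vector on $\FAB$ satisfying $\varphi_f \geqslant q+n$. Applying Corollary~\ref{cor:subset_sel} with $\W_f = \I_f = \{j : f(u'_j,t) > 0\}$ then immediately yields that $(\bA,\bB_{\I_f})$ is structurally controllable.

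For the cost equality, I would just unfold the definitions. By \eqref{eq:cost}, $c(e) = 0$ for every edge $e$ that is not of the form $(u'_j,t)$, so only such edges contribute to $c_f = \sum_{e : f(e) > 0} c(e)$. An edge $(u'_j,t)$ contributes to $c_f$ precisely when $f(u'_j,t) > 0$, i.e., when $j \in \I_f$, and in that case its contribution is $c(u'_j,t) = p_u(j)$. Summing over $j \in \I_f$ gives $c_f = \sum_{j \in \I_f} p_u(j) = p(\I_f)$, which is the second assertion.

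There is essentially no obstacle here; the lemma is a packaging result whose purpose is to connect the flow-side notation (feasible MCFF solutions and their costs $c_f$) with the controllability-side notation (input subsets and their costs $p(\I)$), so that later results about MCFF can be translated back to minCCIS. The only thing one must be careful about is not to confuse $\Fc$ with $\FAB$ when invoking Corollary~\ref{cor:subset_sel}; as noted above, this is legitimate because $c$ plays no role in determining feasibility of a flow vector.
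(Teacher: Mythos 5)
Your proof is correct and follows essentially the same route as the paper: the paper restricts $f$ to $\F(\bA,\bB_{\I_f})$ and invokes Lemma~\ref{lem:mcff_feas}, while you invoke Corollary~\ref{cor:subset_sel} on the full network, but both are immediate consequences of Theorem~\ref{th:maxflow} and amount to the same argument. The cost identity is handled identically, by unfolding \eqref{eq:IF}, \eqref{eq:CF} and \eqref{eq:cost}.
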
 
\begin{proof}
Given $f$ is a feasible solution to MCFF($q+n$) on $\Fc$. Thus $f$ is also feasible solution to MCFF($q+n$) on the flow network $\F(\bA, \bB_{\I_f})$. Now by Lemma~\ref{lem:mcff_feas} the structured system $(\bA,\bB_{\I_f})$ is structurally controllable. Now we will prove that $p(\I_f) = c_f$. This follows from Equations \eqref{eq:IF}, \eqref{eq:CF} and the cost definition given by Equation \eqref{eq:cost}.
\end{proof}

\remove{ Summarizing, a feasible solution to minCCIS is a selection of inputs $\I$ such that the system $(\bA, \bB_{\I})$ is controllable. Then the question that arises is: does a feasible solution to MCFF($q+n$) give a feasible solution to minCCIS? If so, then does optimal solution to MCFF($q+n$) when translated back give an optimal solution to minCCIS? To answer the above questions we give the following result. }

Now we prove the equivalence between minCCIS and MCFF($q+n$) through the following theorem.

\begin{theorem}\label{th:mcffformulation1}
The flow network $\Fc$ can be constructed in $O(n^2)$ computations. Also, $\I_{f^\*_M}$ is an optimal solution to minCCIS, where $f^\*_M$ is an optimal solution to MCFF($q+n$) on $\Fc$. Moreover, $p^\* = c^\*$.
\end{theorem}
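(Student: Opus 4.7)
The statement bundles three claims: (i) $\Fc$ is constructed in $O(n^2)$ time, (ii) $\I_{f^\*_M}$ solves minCCIS, and (iii) $p^\* = c^\*$. Claim (i) is immediate: Lemma~\ref{lem:flowgraph} already shows $\FAB$ is built in $O(n^2)$; on top of this I only need to assign costs to the edges $(u'_j,t)$ via \eqref{eq:cost}, which is linear in the number of inputs and so adds $O(m) = O(n)$ work. The rest of the plan concentrates on (ii) and (iii), which I will obtain simultaneously by sandwiching $c^\*$ and $p^\*$.

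For the inequality $c^\* \geqslant p^\*$, I invoke Lemma~\ref{lem:solution}: since $f^\*_M$ is feasible for MCFF($q+n$) on $\Fc$, the system $(\bA, \bB_{\I_{f^\*_M}})$ is structurally controllable, so $\I_{f^\*_M} \in \K$, and moreover $p(\I_{f^\*_M}) = c_{f^\*_M} = c^\*$. By definition of $p^\*$ as the minimum over $\K$, this yields $p^\* \leqslant c^\*$.

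The harder direction is $c^\* \leqslant p^\*$, which requires turning a combinatorial optimum of minCCIS into a feasible flow of the prescribed throughput. I start from any optimal $\I^\* \in \arg\min_{\I\in\K} p(\I)$. Structural controllability of $(\bA, \bB_{\I^\*})$ combined with Theorem~\ref{th:maxflow} guarantees a feasible flow vector $f'$ in $\F(\bA, \bB_{\I^\*})$ with $\varphi_{f'} \geqslant q+n$. I lift $f'$ to $\FAB$ (hence to $\Fc$) by setting $f'(e) = 0$ on every edge absent from $\F(\bA, \bB_{\I^\*})$; the capacity and conservation constraints still hold because the excluded edges are exactly those incident to inputs outside $\I^\*$, and no flow was traversing them. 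Thus $f'$ is a feasible solution of MCFF($q+n$) on $\Fc$. Its cost is $c_{f'} = \sum_{j:\, f'(u'_j,t)>0} p_u(j)$, and the only edges $(u'_j,t)$ that can carry positive flow in this lifted vector are those with $j \in \I^\*$, giving $c_{f'} \leqslant \sum_{j\in \I^\*} p_u(j) = p^\*$. Optimality of $f^\*_M$ then gives $c^\* \leqslant c_{f'} \leqslant p^\*$.

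Combining both directions delivers $p^\* = c^\*$, and since $\I_{f^\*_M} \in \K$ with $p(\I_{f^\*_M}) = c^\* = p^\*$, $\I_{f^\*_M}$ is an optimal input selection for minCCIS, finishing (ii) and (iii). The main technical point to be careful about is the lifting step: I must verify that zeroing out flow on the edges corresponding to inputs outside $\I^\*$ preserves feasibility and does not inflate the cost — the edge-cost assignment in \eqref{eq:cost} is tailor-made so that only $(u'_j,t)$ contributes, which is what makes this bookkeeping clean.
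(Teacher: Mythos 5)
Your proposal is correct and follows essentially the same route as the paper: both directions rest on Lemma~\ref{lem:solution} (to get $\I_{f^\*_M}\in\K$ with $p(\I_{f^\*_M})=c^\*$) and on Theorem~\ref{th:maxflow} together with the observation that a feasible flow on $\F(\bA,\bB_{\I^\*})$ lifts to $\Fc$ by zero-extension. The paper merely packages the second direction as a proof by contradiction instead of your direct two-sided sandwich, which is an immaterial difference.
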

\begin{proof}
From Lemma~\ref{lem:flowgraph}, we know that constructing the flow network $\FAB$ has complexity $O(n^2)$. In addition to this we define a cost vector $c$ and flow requirement $\pl$ to construct $\Fc$. Since these are of linear complexity, we conclude that complexity involved in constructing the flow network $\Fc$ is $O(n^2)$.

Let $f^\*_M$ be an optimal flow vector for MCFF($q+n$) on $\Fc$. Note that by definition $c_{f^\*_M} = c^\*$, and by Lemma~\ref{lem:solution} $c^\* = p(\I_{f^\*_M})$.
Now we show that $\I_{f^\*_M}$ is an optimal solution to minCCIS. First, we argue that $\I_{f^\*_M}$ is a feasible solution to minCCIS, i.e. $\I_{f^\*_M} \in \K$. It follows from Lemma~\ref{lem:solution} that the system $(\bA,\bB_{\I_{f^\*_M}})$ is structurally controllable. Thus, $\I_{f^\*_M} \in \K$. Suppose $\I_{f^\*_M}$ is not an optimal solution to minCCIS. Then there exists $\I\in\K$ such that $p(\I) < p(\I_{f^\*_M})$. Consider the flow network $\F(\bA,\bB_\I)$. By Theorem~\ref{th:maxflow}, there exists a feasible flow vector $f$ in  $\F(\bA,\bB_\I)$ such that $\varphi_f \geqslant q+n$. Since $\F(\bA,\bB_\I)$ is a sub-graph of $\F(\bA,\bB)$, $f$ is also a feasible flow vector in $\FAB$ with $\varphi_f \geqslant q+n$. We note that $\I_f \subseteq \I$.
Thus, from Lemma~\ref{lem:solution}, $c_f \leqslant p(\I) < p(\I_{f^\*_M}) = c^\*$. This contradicts optimality of $f^\*_M$. Thus, $\I_{f^\*_M}$ is an optimal solution to minCCIS. Finally, $p^\* = c^\*$ follows from Lemma~\ref{lem:solution} and optimality of $\I_{f^\*_M}$ for minCCIS.
\end{proof} 

Thus given an instance of minCCIS we construct $\Fc$ and reduce it to an MCFF as discussed. After solving MCFF($q+n$), we get an optimal flow $f^\*_M$. From $f^\*_M$, we get back the corresponding optimal solution to minCCIS, $\I^\* =  \I_{f^\*_M}=\{j: f^\*_M(u'_j,t) > 0 \}$, i.e., the minimum cost incurring set of inputs selected under $f^\*_M$. 
Unfortunately, MCFF is also a known NP-hard problem. Thus, optimal solution $f^\*_M$ may not be obtained in polynomial time complexity unless $P=NP$. However, it is a well studied problem as it relates to many fields including job-shop scheduling, transportation network and computer networks \cite{MagWon:84}. For MCFF, approximation algorithm in addition to many good heuristics exist \cite{KhaFuj:91}. The commonly used approaches for approximating MCFF include local search of adjacent extreme flows \cite{GalSod:79}, \cite{Yag:71}, dynamic programming \cite{Zan:68}, \cite{EriMonVei:87} and branch and bound technique \cite{FloRob:71}, \cite{Sol:74}. We can potentially use these existing algorithms to obtain approximate solution to minCCIS. However to do this, we need to show that an approximate solution to MCFF yields an approximate solution to minCCIS. We establish this in the following result.
\begin{theorem}\label{th:epsilon1}
Let $f$ be a feasible solution to MCFF($q+n$) on $\Fc$. Then for any $\epsilon \geqslant 1$, $c_f \leqslant  \epsilon\,c^\* $ implies that $p(\I_f) \leqslant \epsilon\,p^\* $.
\end{theorem}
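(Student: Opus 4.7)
The plan is to derive this approximation transfer result as an almost immediate consequence of two facts already established: the pointwise cost equality between feasible MCFF solutions and their induced input sets (Lemma~\ref{lem:solution}), and the equality of the optimal costs of the two problems (Theorem~\ref{th:mcffformulation1}).

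First, I would invoke Lemma~\ref{lem:solution} applied to the given feasible flow vector $f$. The lemma tells us that $(\bA,\bB_{\I_f})$ is structurally controllable and, crucially, that $p(\I_f) = c_f$. So $\I_f$ is automatically a feasible solution to minCCIS, and its cost matches the flow-cost of $f$ exactly. This is the key identity that transports the MCFF objective value back to the minCCIS objective value without any loss.

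Second, I would use Theorem~\ref{th:mcffformulation1}, which established $p^\* = c^\*$. Combining this with the hypothesis $c_f \leqslant \epsilon\, c^\*$ and the identity from the first step gives the chain
\[
p(\I_f) \;=\; c_f \;\leqslant\; \epsilon\, c^\* \;=\; \epsilon\, p^\*,
\]
which is precisely the conclusion. Since $\I_f \in \K$ (by the controllability part of Lemma~\ref{lem:solution}), the comparison against $p^\*$ is meaningful.

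There is no genuine obstacle here; the content of the theorem is that the reduction from minCCIS to MCFF is cost-preserving not just at optimality but on every feasible solution, so any $\epsilon$-approximation guarantee lifts verbatim. The only thing worth emphasizing in the write-up is why $p(\I_f)=c_f$ rather than merely $p(\I_f)\leqslant c_f$: by the cost assignment in~\eqref{eq:cost}, the only edges carrying nonzero cost are the edges $(u'_j,t)$, and by construction of $\I_f$ in~\eqref{eq:IF} these are exactly the edges indexing the chosen inputs, so no cost is double counted and none is missed.
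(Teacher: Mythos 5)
Your proposal is correct and follows exactly the paper's own argument: the paper proves this theorem in one line by citing Lemma~\ref{lem:solution} (which gives $p(\I_f)=c_f$) and Theorem~\ref{th:mcffformulation1} (which gives $p^\*=c^\*$), and your chain $p(\I_f)=c_f\leqslant\epsilon\,c^\*=\epsilon\,p^\*$ is precisely the intended deduction. You have merely made explicit the details the paper leaves implicit, so there is nothing to add.
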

\begin{proof}
The result immediately follows from Lemma~\ref{lem:solution} and Theorem~\ref{th:mcffformulation1}.
\end{proof}
Note that a feasible solution $f$ that satisfies the condition in Theorem~\ref{th:epsilon1} is called as an $\epsilon$-optimal solution.
In the next section, we obtain an approximation algorithm for MCFF($q+n$) on $\Fc$.
\section{Approximation Algorithm for MCFF on $\Fc$}\label{sec:approx}
MCFF over general graphs are shown to be hard to approximate \cite{KruNolSchWirRav:99}, \cite{EveKorSla:05}. Specifically, the following result is known.

\begin{prop} [\cite{KruNolSchWirRav:99}, Theorem 17]\label{prop:inapprox}
MCFF is strongly NP-hard even on bipartite graphs. Unless $NP \subseteq DTIME(N^{O({\rm log~}{\rm log}N)})$, for any $\epsilon > 0$ there is no approximation
algorithm for MCFF with a performance of $(1-\epsilon){~\rm ln}\pl$, where $\pl$ is the given flow value to be achieved.
\end{prop}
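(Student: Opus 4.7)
The plan is to prove both parts of the statement via a gap-preserving reduction from the Minimum Set Cover problem, which is known to be strongly NP-hard and, by Feige's classical result, inapproximable to within $(1-\epsilon)\ln n$ unless $NP \subseteq DTIME(n^{O(\log \log n)})$. The central idea is that MCFF charges a fixed cost per \emph{edge used} (regardless of how much flow it carries), so edges naturally encode the choice of ``paying once to use a set.''

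First I would describe the reduction. Given an instance of Set Cover with universe $U = \{e_1, \ldots, e_n\}$ and family $\{S_1, \ldots, S_m\}$ of subsets with cost $p(S_j)$ each, construct a bipartite layered flow network as follows. Create a source $s$, a sink $t$, one node $v_j$ for each set $S_j$, and one node $w_i$ for each element $e_i$. Add an edge $(s, v_j)$ with capacity $n$ and cost $p(S_j)$; for each $e_i \in S_j$, add an edge $(v_j, w_i)$ with capacity $1$ and cost $0$; finally add an edge $(w_i, t)$ with capacity $1$ and cost $0$ for each element. Set the required flow to $\pl = n$. The resulting network is bipartite (in the layered sense used in \cite{KruNolSchWirRav:99}), establishing the strong NP-hardness claim once equivalence with Set Cover is shown.

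Next I would verify the reduction's correctness. Any feasible flow of value $n$ must saturate every edge $(w_i, t)$, hence must route one unit through at least one set node $v_j$ with $e_i \in S_j$; the set $\I_f = \{j : f(s, v_j) > 0\}$ is therefore a set cover of $U$. Because MCFF pays the cost $p(S_j)$ the instant $f(s, v_j) > 0$ (independent of the exact flow value), the cost of the flow equals $\sum_{j \in \I_f} p(S_j)$, the Set Cover cost of $\I_f$. Conversely, any set cover $\I$ can be realized as a feasible flow of value $n$ by pushing one unit through each covered element using some $v_j \in \I$, so the MCFF optimum equals the Set Cover optimum. In particular, the reduction is polynomial and exactly preserves the objective value.

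Finally I would invoke inapproximability. Because the optimal objective is preserved on the nose, any $\alpha$-approximation algorithm for MCFF yields an $\alpha$-approximation for Set Cover. Feige's theorem implies that unless $NP \subseteq DTIME(n^{O(\log \log n)})$, no polynomial-time algorithm approximates Set Cover within $(1-\epsilon)\ln n$; since $\pl = n$ in our instance, this rules out $(1-\epsilon)\ln \pl$-approximations for MCFF under the same complexity assumption. Strong NP-hardness follows from the corresponding property of Set Cover, since all numerical quantities in the reduction (capacities equal to $n$ or $1$, unit costs when $p \equiv 1$) are polynomially bounded in the input size. The main obstacle is ensuring the reduction is genuinely \emph{gap-preserving}: one must check that the fixed-cost edges sitting at the top of the network are the only place cost is accumulated and that the unit-capacity edges at the bottom force full coverage, so no ``fractional'' routing can cheat the correspondence with integral set cover solutions; here the integrality theorem for flows with integer capacities and the one-to-one alignment between cost-bearing edges and sets is what closes the gap cleanly.
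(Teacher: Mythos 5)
The paper offers no proof of this proposition: it is imported verbatim as Theorem~17 of \cite{KruNolSchWirRav:99}, so there is nothing internal to compare your argument against. That said, your reduction from Set Cover is sound and is essentially the canonical argument behind the cited result: the optimum of your MCFF instance equals the Set Cover optimum exactly (cost is incurred only on the edges $(s,v_j)$, and the flow cost coincides with the cost of the induced cover $\{j: f(s,v_j)>0\}$, which is forced to be a cover because all unit-capacity edges into $t$ must be saturated), so both strong NP-hardness and the $(1-\epsilon)\ln\pl$ inapproximability transfer directly from Feige's theorem with $\pl = n$. One small improvement: your hedge about bipartiteness ``in the layered sense'' is unnecessary, since the underlying undirected graph of your network is literally bipartite under the partition $\{s\}\cup\{w_i\}_i$ versus $\{v_j\}_j\cup\{t\}$. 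The only point worth stating explicitly is the direction you flag at the end, namely that an $\alpha$-approximate \emph{flow} yields an $\alpha$-approximate \emph{cover}; this holds because the induced cover's cost equals (not merely bounds) the flow's fixed cost, and it needs no integrality argument since flow conservation at each $w_i$ already guarantees some incoming edge with positive flow.
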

 However, a $\pl$-approximate solution to MCFF($\pl$) is given in \cite{KruNolSchWirRav:99}, \cite{AssEmaFarYazZar:14}. The algorithm given in \cite{KruNolSchWirRav:99} uses a min-cost max-flow algorithm. The approximation in \cite{AssEmaFarYazZar:14} uses a primal-dual formulation and  has complexity $O(|V||E|^2{\rm log}(|V|^2/|E|))$. We give a polynomial complexity $\Delta$-approximate solution to MCFF($q+n$) on $\Fc$, where $\Delta$ is the maximum in-degree of nodes $u'_j$'s in $\Fc$. Note that $1 \leqslant \Delta \leqslant q+1$. Next we elaborate our approach. 
 
Consider a flow network $\Fc$ and define the following Linear Program (LP):  
\begin{prob}\label{prob:opt}

\noindent
Minimize: $\sum_{e \in E_F} c(e)f(e)$

\noindent
Subject to:

(1)~$f$ is a feasible flow vector, and

(2)~$\varphi_{f} \geqslant q+n$.
\end{prob}

Problem~\ref{prob:opt} is a well studied flow problem in literature, known as the {\it minimum cost flow problem} (MCFP) \cite{AhuMagOrl:93}. 
Note that the key difference between MCFF and MCFP is that in the former cost incurred does not depend on the flow through an edge, rather it depends only on whether the edge is used; however in the latter the cost increases linearly with the flow through the edge.  
MCFP can be solved in polynomial time with complexity $O(\ell^4\,{\rm log}\,\ell)$, where $\ell$ denotes the number of vertices in the flow network \cite{Orl:93}. Let the value of the objective function in Problem~\ref{prob:opt} for a feasible flow vector be $C_f$. Also, let $\c$ denotes the minimum value of the objective function of Problem~\ref{prob:opt}. Also, let $\flp$ be the corresponding optimal flow vector, i.e. $\c \leqslant C_f$ for any feasible solution $f$ of the LP. Following preliminary result holds as a direct consequence of [\cite{AhuMagOrl:93}, Theorem~9.8, pp.~318].

\begin{lem}\label{lem:lpFlowInt}
For every $e\in E_F$, $\flp(e) \in \mathbb{Z}$.
\end{lem}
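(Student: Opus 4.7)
The plan is to invoke the classical integrality theorem for minimum cost flow problems, which the excerpt already cites as [AhuMagOrl:93, Theorem~9.8, pp.~318]. First I would note that Problem~\ref{prob:opt} is a standard minimum cost flow LP on the network $\Fc$: the only non-standard element is the lower bound $\varphi_f \geqslant q+n$ on the total $s$--$t$ flow, which can be put into canonical MCFP form by adjoining a return arc $(t,s)$ with zero cost, infinite capacity, and lower bound $q+n$, and then seeking a minimum cost circulation. This preprocessing does not alter the optimal value or the integrality of optimal solutions, it merely casts Problem~\ref{prob:opt} in the setting of the cited theorem.

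Next I would verify the integrality hypotheses of that theorem on $\Fc$. By Step~\ref{step:capa} of Algorithm~\ref{alg:SC to mfp}, every capacity $b(e)$ is either $1$ or $n+1$, hence $b(e) \in \mathbb{Z}_+$. The newly adjoined lower bound $q+n$ is a non-negative integer since $q$ and $n$ are counts of SCC's and states respectively. All node supplies/demands in the circulation formulation are zero, and the edge lower bounds are $0$ (except the return arc's lower bound, which is $q+n$). Thus every data item of the MCFP on $\Fc$ is an integer.

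The cited theorem then asserts that the set of feasible circulations has all its extreme points integer-valued; equivalently, the node-arc incidence matrix of $\Fc$ is totally unimodular, so the LP relaxation (Problem~\ref{prob:opt}) has an integer optimal basic feasible solution. Since the LP is solved by algorithms that return a vertex (e.g.\ simplex, or the $O(\ell^4 \log \ell)$ algorithm cited from [Orl:93]), we may select $\flp$ to be such a vertex. This yields $\flp(e) \in \mathbb{Z}$ for every $e \in E_F$, as required.

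There is no real obstacle here; the only subtlety is being explicit that the inequality flow constraint $\varphi_f \geqslant q+n$ does not break integrality. This is handled by the return-arc reduction above, after which the result is a direct application of the total unimodularity of network flow LPs.
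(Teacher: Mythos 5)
Your proposal is correct and follows the same route as the paper, which simply states the lemma as a direct consequence of the integrality theorem for minimum cost flow in [AhuMagOrl:93, Theorem~9.8]. Your additional care in reducing the side constraint $\varphi_f \geqslant q+n$ to a return-arc circulation and in noting that $\flp$ should be taken as an integral optimal vertex is a reasonable elaboration of the same argument rather than a different approach.
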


In the following result, we obtain a relation between the optimal value $\c$ of the LP and the optimal cost $c^\*$ of MCFF($q+n$).

\begin{lem}\label{lem:cAndc}
Following holds: $\c \leqslant \Delta \, c^\*$, where $\Delta$ is the maximum in-degree for nodes in $\{u'_j\}_{j=1}^m$.
\end{lem}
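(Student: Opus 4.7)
The plan is to exhibit the optimal MCFF flow vector $f^\*_M$ itself as a feasible solution to the LP in Problem~\ref{prob:opt} and then bound its linear-cost value $C_{f^\*_M}$ from above by $\Delta\,c^\*$. Since $\c$ is the optimum of the LP, $\c \leqslant C_{f^\*_M}$, which will yield the required inequality.

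First I would observe that any feasible solution of MCFF($q+n$) on $\Fc$ is, by definition, a feasible flow vector with $\varphi_f \geqslant q+n$, and therefore also a feasible solution of Problem~\ref{prob:opt}. In particular $f^\*_M$ is LP-feasible, so $\c \leqslant C_{f^\*_M}$. Next, I would unpack $C_{f^\*_M}$ using the cost vector defined in \eqref{eq:cost}: since $c(e)=0$ on every edge except those of the form $(u'_j,t)$, we get
\[
C_{f^\*_M} \;=\; \sum_{e\in E_F} c(e)\,f^\*_M(e) \;=\; \sum_{j=1}^{m} p_u(j)\,f^\*_M(u'_j,t).
\]

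The key step is to bound $f^\*_M(u'_j,t)$ for each index $j$ with $f^\*_M(u'_j,t)>0$. Using Lemma~\ref{lem:lpFlowInt}-style integrality (coming from the integer capacities via Step~\ref{step:capa} of Algorithm~\ref{alg:SC to mfp}), I may assume $f^\*_M$ is integer-valued. By flow conservation at $u'_j$, the flow on $(u'_j,t)$ equals the total flow entering $u'_j$. Every edge incoming to $u'_j$ has the form $(\cN_i,u'_j)$ or $(u_j,u'_j)$, and by Step~\ref{step:capa} each such edge has capacity $1$. Hence the in-flow at $u'_j$ is at most its in-degree, which is at most $\Delta$, giving $f^\*_M(u'_j,t) \leqslant \Delta$ for each $j$.

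Substituting this bound, and using the definition $c^\* = c_{f^\*_M} = \sum_{j:\,f^\*_M(u'_j,t)>0} p_u(j)$,
\[
C_{f^\*_M} \;\leqslant\; \sum_{j:\,f^\*_M(u'_j,t)>0} p_u(j)\,\Delta \;=\; \Delta\,c^\*,
\]
and combining with $\c \leqslant C_{f^\*_M}$ delivers $\c \leqslant \Delta\,c^\*$. There is no real obstacle here; the only thing to be careful about is justifying integrality of $f^\*_M$ (so that the per-edge bound of $1$ on incoming edges aggregates cleanly into the in-degree bound $\Delta$), which follows because all capacities in $\Fc$ are integers and MCFF admits an integer optimum on networks with integer capacities.
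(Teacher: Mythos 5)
Your proof is correct and follows essentially the same route as the paper: use LP-feasibility of $f^\*_M$ to get $\c \leqslant C_{f^\*_M}$, then bound the flow on each cost-bearing edge $(u'_j,t)$ by $\Delta$ via the unit capacities of its incoming edges. The appeal to integrality of $f^\*_M$ is unnecessary (the capacity constraints alone give in-flow at most the in-degree, integer or not), but this is harmless.
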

\begin{proof}
Note that $b(e) = 1$ for every $e \in E_F \setminus \{(u'_j,t)\}_{j=1}^m$. Thus, the total flow carried by any $e \in E_F$ is at most $\Delta$ under any feasible flow vector $f$. Hence, we have the following:
\begin{align}
\c & = \sum_e \flp(e) c(e), \nonumber \\
& \leqslant  \sum_e f^\*_M(e) c(e), \label{eq:cc1} \\
& = \sum_{e : f^\*_M(e) > 0} f^\*_M(e) c(e), \nonumber \\
& \leqslant \Delta \, \sum_{e : f^\*_M(e) > 0} c(e), \label{eq:cc2} \\
& = \Delta \, c^\*. \nonumber
\end{align}
Equation \eqref{eq:cc1} follows as $f^\*_M$ is a feasible solution to the LP. Equation \eqref{eq:cc2} follows as $f(e) \leqslant \Delta$ for every $e$ and feasible flow vector $f$. 
\end{proof}
In the following result, we obtain a relation between the optimal value $\c$ of the LP and the cost of the inputs selected under $\flp$ denoted as $c_{\flp}$.
\begin{lem}\label{lem:coptAndc}
Following holds: $c_{\flp} \leqslant \c$.
\end{lem}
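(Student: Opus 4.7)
The plan is to exploit the integrality of the LP optimum provided by Lemma~\ref{lem:lpFlowInt} together with the non-negativity of the costs $c(e)$. The key observation is that the MCFF cost of a flow counts each used edge exactly once ($c_f = \sum_{e:f(e)>0} c(e)$), whereas the LP objective weights each edge by its carried flow ($C_f = \sum_e c(e) f(e)$). These two quantities would coincide if every used edge carried exactly one unit of flow; more generally, if each used edge carries at least one unit, then the LP objective dominates the MCFF cost (since $c(e)\ge 0$).

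First I would note that, by Lemma~\ref{lem:lpFlowInt}, $\flp(e)\in\mathbb{Z}$ for every $e\in E_F$. Combined with feasibility (which forces $\flp(e)\ge 0$), this gives the implication $\flp(e)>0 \Rightarrow \flp(e)\ge 1$. Next I would write out the chain
\[
c_{\flp} \;=\; \sum_{e:\flp(e)>0} c(e) \;\leqslant\; \sum_{e:\flp(e)>0} c(e)\,\flp(e) \;=\; \sum_{e\in E_F} c(e)\,\flp(e) \;=\; \c,
\]
where the inequality uses $c(e)\ge 0$ and $\flp(e)\ge 1$ on the summation range, and the middle equality uses the fact that any edge not in the support of $\flp$ contributes zero to the LP objective.

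There is no real obstacle here: the result is a one-line consequence of integrality of the LP optimum and non-negativity of the edge costs, and both ingredients have already been established (Lemma~\ref{lem:lpFlowInt} and the definition of $c$ in Equation~\eqref{eq:cost}). The only point worth emphasising in the write-up is why integrality holds, namely that Problem~\ref{prob:opt} is a standard MCFP instance on a network with integer capacities and an integer demand $q+n$, so [\cite{AhuMagOrl:93}, Theorem~9.8] applies.
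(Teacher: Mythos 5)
Your proof is correct and follows essentially the same route as the paper: both invoke Lemma~\ref{lem:lpFlowInt} to get $\flp(e)>0 \Rightarrow \flp(e)\geqslant 1$ and then bound $c_{\flp}=\sum_{e:\flp(e)>0}c(e)\leqslant\sum_{e}c(e)\flp(e)=\c$. Your explicit mention of the non-negativity of $c(e)$ is a small but welcome clarification that the paper leaves implicit.
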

\begin{proof}
The cost of the inputs selected under $\flp$ 
\begin{align}
c_{\flp} & = \sum_{e: \flp(e) > 0} c(e), \nonumber \\
& \leqslant  \sum_{e: \flp(e) > 0} \flp(e) c(e), \label{eq:cc3} \\
& = \c, \nonumber 
\end{align}
Equation \eqref{eq:cc3} follows as if $\flp(e) > 0$, then $\flp(e) \geqslant 1$ by Lemma~\ref{lem:lpFlowInt}.
\end{proof}

Following key result is an immediate consequence of Lemmas~\ref{lem:cAndc} and~\ref{lem:coptAndc}.

\begin{theorem}\label{th:approximation}
The flow vector $\flp$ is a $\Delta$-approximate solution of MCFF($q+n$), i.e. $c_\flp \leqslant \Delta \, c^\*$.
\end{theorem}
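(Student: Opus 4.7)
The plan is to observe that this theorem is essentially a direct composition of the two preceding lemmas, so the proof reduces to chaining two inequalities and a brief feasibility check. Specifically, I would argue that since $\flp$ is an optimal solution to the LP in Problem~\ref{prob:opt}, it satisfies the constraints of that LP, namely that $\flp$ is a feasible flow vector and $\varphi_{\flp} \geqslant q+n$. These are exactly the feasibility conditions required for MCFF($q+n$) on $\Fc$, so $\flp$ qualifies as a candidate solution whose quality can be compared against $c^\*$.

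Next, I would simply chain the bounds. Lemma~\ref{lem:coptAndc} gives $c_{\flp} \leqslant \c$, and Lemma~\ref{lem:cAndc} gives $\c \leqslant \Delta\, c^\*$. Concatenating these yields $c_{\flp} \leqslant \Delta\, c^\*$, which is precisely the claim that $\flp$ is a $\Delta$-approximate solution of MCFF($q+n$).

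There is essentially no obstacle here since the nontrivial work was already done in Lemmas~\ref{lem:cAndc} and~\ref{lem:coptAndc}. The only substantive content worth flagging in the write-up is the conceptual point underlying Lemma~\ref{lem:cAndc}: the LP relaxation is a lower bound on the MCFF cost precisely because every edge outside $\{(u'_j,t)\}$ has capacity one, so the LP cost of any fixed feasible $f$ differs from its MCFF cost by at most a factor of $\Delta$ (the maximum flow any single cost-bearing edge can carry). Thus the approximation factor $\Delta$ comes directly from bounding the integer flow on edges $(u'_j,t)$. I would keep the proof to two short sentences, one invoking each lemma, with a remark pointing to this intuition for the reader.
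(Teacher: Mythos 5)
Your proposal is correct and matches the paper's own proof, which likewise concludes by chaining Lemma~\ref{lem:coptAndc} and Lemma~\ref{lem:cAndc} to get $c_\flp \leqslant \c \leqslant \Delta\, c^\*$. The brief feasibility remark and the intuition about where the factor $\Delta$ comes from are fine additions but do not change the argument.
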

\begin{proof}
Note from Lemmas~\ref{lem:cAndc} and~\ref{lem:coptAndc} that $c_\flp \leqslant \c \leqslant \Delta \, c^\*$. This proves the required.
\end{proof}

\begin{rem}\label{rem:scc}
Note that the number of non-top linked SCC's is at most $n$. Thus, in the worst case $\Delta = O(n)$. This corresponds to states being decoupled. However, in practical systems the states interact and as a result the number of non-top linked SCC's may be much smaller than $n$. In such cases, the above algorithm may give a tighter approximation.  
\end{rem}
 
There exist various polynomial algorithms for solving Problem~\ref{prob:opt}. The known algorithms include capacity scaling algorithm, cost scaling algorithm, double scaling algorithm, minimum mean cycle-cancelling algorithm, repeated capacity scaling algorithm and enhanced capacity scaling algorithm (see \cite{AhuMagOrl:93} and references therein for more details). The best strongly polynomial algorithm runs in  $O(\ell^4\,{\rm log\,}\ell)$ in a generic flow network with $\ell$ nodes \cite{Orl:93}. However, because of the special structure of the flow network $\Fc$, Problem \ref{prob:opt} can be solved using a simpler algorithm that incorporates a minimum weight perfect matching and a greedy scheme. We describe the pseudo code of this two stage procedure in Algorithm~\ref{alg:twostage}.
\begin{algorithm}[t]
  \caption{Pseudo code for solving Problem \ref{prob:opt}
  \label{alg:twostage}}
  \begin{algorithmic}
\State \textit {\bf Input:} Structured system $(\bA, \bB)$, input cost vector $p_u$ and flow network $\Fc$ 
\end{algorithmic}
  \begin{algorithmic}[1]

  \State  Construct $\B(\bA, \bB)$ \label{step:bip}
  \State  For each edge $e$ define weight
\State $w(e) \leftarrow \begin{cases}
0 , ~~ {\rm for}~ e = (x_r,x'_k),\\
p_u(j),~~ {\rm for}~ e = (u_j,x'_k). 
\end{cases}$ \label{step:weight}
\State Find minimum weight perfect matching of $\B(\bA, \bB)$ under weight function $w$, say $M_A$\label{step:match}
\For {$i = 1$ to $q$}
\State $u(\cN_i) \in \arg\min_{u_j: (\cN_i, u'_j) \in E_F}p_u(j)$\label{step:greedy}
\State $\S_A \leftarrow \{(\cN_i, u'(\cN_i)) \}_{i=1}^q$\label{step:edgeset}
\EndFor
\end{algorithmic}
\begin{algorithmic}
\State \textit{\bf Output:} Flow vector $f_A$ constructed using Algorithm~\ref{alg:flow_construct} with inputs $M_A$ and $\S_A$.
  \end{algorithmic}
\end{algorithm} 

In the first stage of Algorithm~\ref{alg:twostage}, we run a minimum weight perfect matching algorithm on the system bipartite graph $\B(\bA, \bB)$ with weights defined as shown in Step~\ref{step:weight}. Let $M_A$ be a matching obtained as solution of this stage (see Step~\ref{step:match}). In stage two, we perform a greedy selection to connect all the non-top linked SCC's to some input. To achieve this for all $\cN_i$'s, $i \in \{1,\ldots,q\}$, we greedily assign the least cost input which has an edge to some state in $\cN_i$ (see Step~\ref{step:greedy}). Let the least cost input corresponding to $\cN_i$ be $u(\cN_i)$. We define $\S_A = \{(\cN_i,u'(\cN_i))\}_{i=1}^q$ (see Step~\ref{step:edgeset} ). 
Finally, we use Algorithm~\ref{alg:flow_construct} to construct a flow vector $f_A$ based on $M_A$ and $\S_A$.

\begin{algorithm}
  \caption{Pseudo code for constructing flow vector $f$ from perfect matching $M$ and set of edges $\S=\{(\cN_i, u'(\cN_i)) \}_{i=1}^q$
  \label{alg:flow_construct}}
  \begin{algorithmic}
\State \textit {\bf Input:} Perfect matching $M$ and edge set $\S$
\end{algorithmic}
  \begin{algorithmic}[1]
\State $\X_M \leftarrow \{x_r: (x_r, x'_k) \in M\} $
\State $\U_M \leftarrow \{u_j: (u_j, x'_k) \in M\} $ 
\State $\U_S \leftarrow \{u_j: (\cN_i, u'_j) \in \S\} $ 
\State  Define the flow vector $f$ as
\State $f(e) \leftarrow \begin{cases}

1,~~{\rm for}~e \in \{(s,x'_k),(x'_k, x_r), (x_r, t)\},\\
~{\rm for~} k=1,\ldots,n~{\rm and~} x_r \in \X_M,\\

1,~~{\rm for}~e \in \{(s,x'_k), (x'_k, u_j), (u_j, u'_j)\},\\
~{\rm for~} k=1,\ldots,n~{\rm and~} u_j \in \U_M,\\

1,~~{\rm for}~e \in \{(s,\cN_i,), (\cN_i, u'_j)\},\\
~{\rm for~} i=1,\ldots,n~{\rm and~} u_j \in \U_S, \\
\sum_{k=1}^n \sum_{j=1}^m \mathbb{I}_{\{(x'_k,u_j)\in M\}} + \sum_{i=1}^q\sum_{j=1}^m \mathbb{I}_{\{(\cN_i,u'_j)\in \S\}},\\
 {\rm for}~e=(u'_j,t), ~{\rm for~} j=1,\ldots,m.

\end{cases}$ \label{step:flow}
\end{algorithmic}
\begin{algorithmic}
\State \textit{\bf Output:} Flow vector $f$
\end{algorithmic}
\end{algorithm}

We prove the optimality of the constructed flow vector $f_A$ after stating the following supporting lemmas.

\begin{lem}\label{lem:LP_feasible_flow}
Given any valid inputs $M$ and $\S$ of Algorithm~\ref{alg:flow_construct}, let $f$ denote the output flow vector. Then $f$ is a feasible solution to the LP given in Problem~\ref{prob:opt}. Moreover, the value
\begin{align}\label{eq:Cf}
C_f = \sum_{k=1}^n \sum_{j=1}^m p_u(j)\mathbb{I}_{\{(x'_k,u_j)\in M\}} + \sum_{i=1}^q\sum_{j=1}^m p_u(j)\mathbb{I}_{\{(\cN_i,u'_j)\in \S\}}, \end{align}
where $\mathbb{I}_{\cal A}$ is the indicator function of ${\cal A}$.
\end{lem}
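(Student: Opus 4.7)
The plan is to verify the two defining properties of a feasible LP solution—capacity and flow conservation—and then to compute $C_f$ directly from the cost vector. For capacity, I would observe that every edge in the construction of Step~\ref{step:flow} other than those of the form $(u'_j,t)$ carries flow $0$ or $1$, matching its unit capacity $b(e)=1$. On an edge $(u'_j,t)$, the prescribed flow equals the number of $x'_k$ matched to $u_j$ in $M$ (at most one, since $M$ is a matching) plus the number of non-top linked SCC's routed to $u'_j$ through $\S$ (at most $q$); since $q\leqslant n$, the sum is at most $n+1=b((u'_j,t))$.

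Next I would check flow conservation at every vertex in $V_F\setminus\{s,t\}$. At each $x'_k$, one unit enters from $s$ and one unit leaves along the unique edge $(x'_k,y_k)$ given by $M$, where $y_k\in V_\x\cup V_\u$ exists and is unique because $M$ is a perfect matching in $\B(\bA,\bB)$. If $y_k=x_r\in\X_M$, the flow is then forwarded along $(x_r,t)$; if $y_k=u_j\in\U_M$, it is forwarded via $(u_j,u'_j)$ to $u'_j$. Each $\cN_i$ likewise receives a single unit from $s$ and sends it out along the unique edge in $\S$ incident to $\cN_i$. At each $u'_j$, the incoming flow is exactly $\sum_k \mathbb{I}_{\{(x'_k,u_j)\in M\}}+\sum_i \mathbb{I}_{\{(\cN_i,u'_j)\in \S\}}$, which agrees with the value assigned to $f((u'_j,t))$ in Step~\ref{step:flow}. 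Summing the outgoing flows from $s$ yields $\varphi_f=n+q$, meeting constraint~(2) of Problem~\ref{prob:opt} with equality, so $f$ is a feasible solution.

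For the objective value, I would use that the cost vector defined in~\eqref{eq:cost} is supported on the edges $(u'_j,t)$ with $c((u'_j,t))=p_u(j)$. Plugging the expression for $f((u'_j,t))$ from Step~\ref{step:flow} into $C_f=\sum_{e\in E_F} c(e)f(e)=\sum_{j=1}^m p_u(j)\,f((u'_j,t))$ yields the claimed identity after exchanging the sums. The only mildly subtle point in the argument is the capacity check on $(u'_j,t)$—which hinges on both $M$ being a matching and the bound $q\leqslant n$—while the rest of the proof is a direct unrolling of the case definition in Step~\ref{step:flow}.
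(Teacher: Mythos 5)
Your proposal is correct and follows essentially the same route as the paper's proof: verify capacity and conservation directly from the case definition in Algorithm~\ref{alg:flow_construct}, note that $\varphi_f=q+n$ since every source edge carries one unit, and compute $C_f=\sum_{j=1}^m p_u(j)f((u'_j,t))$ because the cost vector is supported on the edges $(u'_j,t)$. The only difference is that you spell out the feasibility checks (in particular the capacity bound on $(u'_j,t)$ via the matching property and $q\leqslant n$) that the paper asserts follow from the construction without elaboration.
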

\begin{proof}
From the construction of $f$ as per Algorithm~\ref{alg:flow_construct}, it follows that $f$ satisfies both flow conservation and capacity constraints in $\Fc$. Thus, the flow vector $f$ is feasible in $\Fc$.  Moreover, note that $f(e) = 1$ for every $e$ that emanates from the source vertex $s$. Hence $\varphi_f = q+n$. This shows that $f$ is a feasible solution to the LP given in Problem~\ref{prob:opt}.

Now, note that since costs are non-zero only for the edges between $u'_j$ and $t$. Also, $c((u'_j,t))=p_u(j)$. Thus,
\[C_f = \sum_{j=1}^m p_u(j) f((u'_j,t)).\] The flow value $f((u'_j,t))$ equals the sum of the flows coming from edges $(u_j,u'_j)$ and $(\cN_i,u'_j)$. Note that the second term in \eqref{eq:Cf} corresponds to the total cost contributed by the flow from edges $(\cN_i,u'_j)$. Now,
the flow on $(u_j,u'_j)$ is greater than zero then it has to come from some edge $(x'_k,u_j)\in M$. Thus, the first term in \eqref{eq:Cf} corresponds to the total cost contributed by the flow from edges $(u_j,u'_j)$. This proves the required.
\end{proof}

\begin{lem}\label{lem:algoflow}
The sets $M_A$ and $\S_A$ given by the Algorithm~\ref{alg:twostage} satisfies the following:

\noindent
(1)~For any perfect matching $M$ of $\B(\bA,\bB)$,
\begin{align*}
\sum_{k=1}^n \sum_{j=1}^m p_u(j)\mathbb{I}_{\{(x'_k,u_j)\in M_A\}} \leqslant \sum_{k=1}^n \sum_{j=1}^m p_u(j)\mathbb{I}_{\{(x'_k,u_j)\in M\}}, \mbox{ and}
\end{align*}

\noindent
(2)~For any set $\S=\{(\cN_i,y'_i):y_i\in V_U, (\cN_i,y'_i) \in E_F\}_{i=1}^q$, 
\begin{align*}
\sum_{i=1}^q\sum_{j=1}^m p_u(j)\mathbb{I}_{\{(\cN_i,u'_j)\in \S_A\}} \leqslant \sum_{i=1}^q\sum_{j=1}^m p_u(j)\mathbb{I}_{\{(\cN_i,u'_j)\in \S\}}.
\end{align*}
\end{lem}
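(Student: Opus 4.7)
The plan is to verify both inequalities essentially by unpacking the definitions of $M_A$ and $\S_A$ produced by Algorithm~\ref{alg:twostage}, and noting that each of the two sums decomposes into a separate optimization problem that Algorithm~\ref{alg:twostage} solves exactly. Neither part requires clever reasoning; the subtle point is only to check that the relevant optimizations decouple cleanly from one another.

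For part~(1), I would start by rewriting the total weight of a perfect matching $M$ of $\B(\bA,\bB)$ under the weight function $w$ defined in Step~\ref{step:weight}. Because $w((x_r,x'_k)) = 0$ for every state-state edge and $w((u_j,x'_k)) = p_u(j)$ for every input-state edge, one has
\[
\sum_{e \in M} w(e) \;=\; \sum_{k=1}^n \sum_{j=1}^m p_u(j)\,\mathbb{I}_{\{(x'_k,u_j)\in M\}}.
\]
Since $M_A$ is a minimum weight perfect matching (Step~\ref{step:match}), $\sum_{e\in M_A} w(e) \leqslant \sum_{e\in M} w(e)$ for every perfect matching $M$ of $\B(\bA,\bB)$, and substituting both sides by the display above gives the required inequality.

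For part~(2), I would first observe that any admissible $\S = \{(\cN_i,y'_i)\}_{i=1}^q$ contains exactly one edge incident to each $\cN_i$, so the sum $\sum_{i,j} p_u(j)\,\mathbb{I}_{\{(\cN_i,u'_j)\in \S\}}$ splits as $\sum_{i=1}^q p_u(j(i))$, where $(\cN_i, u'_{j(i)})$ is the unique edge of $\S$ incident to $\cN_i$. The choices of $j(i)$ for different values of $i$ are independent (the only constraint is $(\cN_i, u'_{j(i)}) \in E_F$), so the minimum of the total sum is achieved by independently minimizing $p_u(j(i))$ over $\{j : (\cN_i, u'_j) \in E_F\}$ for each $i$. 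This is exactly what Step~\ref{step:greedy} does to produce $u(\cN_i)$, so $\S_A$ attains the minimum term by term, yielding the required inequality.

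The main obstacle, if any, is bookkeeping: making sure that in both expressions the weight of $M$ or $\S$ under $w$ (or under the indicator sum) exactly equals the corresponding sum in the lemma statement. Once that identification is made, part~(1) is the optimality of a minimum weight matching and part~(2) is the textbook fact that the sum of independent minimizations equals the minimum of the sum. Consequently the proof will be short, essentially just the two calculations above together with an appeal to the definitions of $M_A$ and $\S_A$.
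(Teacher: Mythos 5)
Your argument is correct and is exactly the reasoning the paper has in mind: the paper's own proof simply states that the lemma is an immediate consequence of how Algorithm~\ref{alg:twostage} constructs $M_A$ and $\S_A$, and your two calculations (identifying the first sum with the weight of a perfect matching under $w$, and decoupling the second sum into independent per-$\cN_i$ minimizations) are the details behind that one-line claim. No gaps.
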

\begin{proof}
The result is an immediate consequence of the way in which the Algorithm~\ref{alg:twostage} constructs $M_A$ and $\S_A$.
\end{proof}

Using the above algorithm we prove the following result.

\begin{theorem}\label{th:two_stage}
The following holds: $C_{f_A} = \c$.
\end{theorem}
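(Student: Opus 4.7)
The plan is to show $C_{f_A} \leq \c$ (the reverse inequality is immediate since $f_A$ is a feasible LP solution by Lemma~\ref{lem:LP_feasible_flow}, so its objective value is at least the optimum $\c$). The main idea is that any optimal LP solution decomposes naturally into a perfect matching of $\B(\bA,\bB)$ and a selection of input edges for each non-top linked SCC, and Algorithm~\ref{alg:twostage} minimizes each of these two contributions independently.

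First, take an optimal LP solution $\flp$, which by Lemma~\ref{lem:lpFlowInt} is integer-valued. Since the $q+n$ edges emanating from $s$ each have capacity $1$, any feasible flow satisfies $\varphi_f \leq q+n$; combined with the constraint $\varphi_f \geq q+n$, we get $\varphi_{\flp} = q+n$, so every edge leaving $s$ carries exactly one unit of flow. Then I would use flow conservation at each $x'_k$: the incoming unit flow must exit along exactly one outgoing edge $(x'_k, y_k)$ where $y_k \in V_X \cup V_U$. Moreover, every node $y \in V_X \cup V_U$ has a unique outgoing edge of unit capacity (either $(x_r,t)$ or $(u_j,u'_j)$), so at most one unit can flow into $y$, which forces the map $k \mapsto y_k$ to be injective. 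Thus $M_{\flp} := \{(x'_k, y_k)\}_{k=1}^n$ is a perfect matching of $\B(\bA,\bB)$ (saturating $V_{X'}$). An analogous argument at each $\cN_i$ produces a set $\S_{\flp} := \{(\cN_i, u'_{j(i)})\}_{i=1}^q$ with $(\cN_i, u'_{j(i)}) \in E_F$.

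Next, I apply the cost formula \eqref{eq:Cf} from Lemma~\ref{lem:LP_feasible_flow} (whose derivation works for any integer feasible flow that decomposes this way, since costs live only on the $(u'_j,t)$ edges and the incoming flow at $u'_j$ is accounted for exactly by the matching edges $(x'_k,u_j)$ and the SCC edges $(\cN_i,u'_j)$). This gives
\[
\c \;=\; C_{\flp} \;=\; \sum_{k,j} p_u(j)\,\mathbb{I}_{\{(x'_k,u_j)\in M_{\flp}\}} \;+\; \sum_{i,j} p_u(j)\,\mathbb{I}_{\{(\cN_i,u'_j)\in \S_{\flp}\}}.
\]
By Lemma~\ref{lem:algoflow}, $M_A$ minimizes the first sum over all perfect matchings of $\B(\bA,\bB)$, and $\S_A$ minimizes the second sum over all valid edge assignments to the non-top linked SCC's. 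Therefore
\[
C_{f_A} \;=\; \sum_{k,j} p_u(j)\,\mathbb{I}_{\{(x'_k,u_j)\in M_A\}} + \sum_{i,j} p_u(j)\,\mathbb{I}_{\{(\cN_i,u'_j)\in \S_A\}} \;\leq\; C_{\flp} \;=\; \c.
\]
Combined with $C_{f_A} \geq \c$ from feasibility of $f_A$, this yields $C_{f_A} = \c$.

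The main obstacle is carefully justifying the decomposition step: one must verify not just that $\flp$ saturates every source edge, but also that the resulting collection of outgoing edges at $x'_k$'s constitutes a bona fide matching (injectivity on the $V_X \cup V_U$ side). This relies on the unit capacity of the outgoing edges from each node in $V_X \cup V_U$, a structural feature built into the construction of $\FAB$ in Algorithm~\ref{alg:SC to mfp}. Once this decomposition is in hand, the separability of the LP cost into a matching term plus an SCC-assignment term is exactly what allows the two independent greedy optimizations of Algorithm~\ref{alg:twostage} to be combined into a globally optimal LP solution.
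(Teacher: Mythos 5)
Your proposal is correct and follows essentially the same route as the paper's proof: decompose the integral optimal LP flow $\flp$ into a perfect matching $M^\*$ of $\B(\bA,\bB)$ and an SCC-to-input edge set $\S^\*$, express $\c = C_{\flp}$ via the separable cost formula \eqref{eq:Cf}, and invoke Lemma~\ref{lem:algoflow} to conclude $C_{f_A} \leqslant C_{\flp}$, with the reverse inequality from feasibility of $f_A$. The only difference is that you spell out the injectivity argument (via unit capacities on outgoing edges of $V_X \cup V_U$) that the paper leaves implicit when asserting $M^\*$ is a perfect matching.
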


\begin{proof}
First, observe from Lemma~\ref{lem:LP_feasible_flow} that  ${f_A}$ is a feasible solution to the LP described in Problem~\ref{prob:opt}. Thus, $C_{f_A} \geqslant \c$. Now, we get the result if we can show $\c \geqslant C_{f_A}$.  Let $\flp$ be the optimal flow vector. Define, the following sets:
\begin{align*}
M^\* &= \{(x'_j,y_j) : \flp((x'_j,y_j)) > 0, y_j\in V_X\cup V_U\}, \\
\S^\* &= \{(\cN_i,u'_j) : \flp(\cN_i,u'_j) > 0 \mbox{ for some } i\in\{1,\ldots,q\}\}.
\end{align*}
Note that $M^\*$ is a perfect matching in $\B(\bA,\bB)$. Also, $\S^\*$ has an outgoing edge from every non-top linked SCC to some input $u'_j$. Note that $\flp$ can be thought as a flow vector constructed from $(M^\*, \S^\*)$
using Algorithm~\ref{alg:flow_construct}. Now the result follows from \eqref{eq:Cf} and Lemma~\ref{lem:algoflow}.
\end{proof}

The following result quantifies computational complexity of Algorithm~\ref{alg:twostage}.
\begin{lem}\label{lem:two_stage_complexity}
Algorithm~\ref{alg:twostage} has complexity $O(n^{3})$.
\end{lem}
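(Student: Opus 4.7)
The plan is to go through Algorithm~\ref{alg:twostage} step by step, bound the work done at each step in terms of $n$, and show that the dominant cost is the minimum weight perfect matching in Step~\ref{step:match}. Throughout, I will use the fact already exploited in Lemma~\ref{lem:flowgraph} that the flow network has $|V_F| = O(n)$ vertices and $|E_F| = O(n^2)$ edges, and that $q \leqslant n$ and $m \leqslant n$ (since if $m > n$ some inputs are redundant for controllability; in any case, only inputs in $E_F$ contribute, and there are at most $n^2$ such input-state edges).

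First, Step~\ref{step:bip} constructs $\B(\bA,\bB)$ directly from the sparsity patterns of $\bA$ and $\bB$ in $O(n^2)$ time, since the bipartite graph has $O(n)$ vertices on each side and at most $O(n^2)$ edges. Steps~\ref{step:weight}--\ref{step:match} assign a weight to each edge in constant time per edge and then compute a minimum weight perfect matching on $\B(\bA,\bB)$; using the Hungarian algorithm (or the Jonker--Volgenant variant), this can be carried out in $O(|V|^3) = O(n^3)$ time, and this is where I expect the main obstacle to sit, since no other step in the algorithm reaches this cubic bound.

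Next, the greedy loop in Steps~\ref{step:greedy}--\ref{step:edgeset} iterates over the $q$ non-top linked SCCs, and for each $\cN_i$ scans the edges $(\cN_i, u'_j) \in E_F$ to pick one of minimum $p_u(j)$. Since the total number of such edges across all $i$ is at most $|E_F| = O(n^2)$, the whole loop runs in $O(n^2)$ time. Finally, invoking Algorithm~\ref{alg:flow_construct} on $M_A$ and $\S_A$ sets the flow on each edge of $\Fc$ based on a constant-work local check, which is $O(|E_F|) = O(n^2)$.

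Summing the contributions gives $O(n^2) + O(n^3) + O(n^2) + O(n^2) = O(n^3)$, proving the claim. The only non-routine part is justifying the $O(n^3)$ bound for the weighted bipartite matching step; I would simply cite a standard minimum weight perfect matching algorithm (e.g.\ \cite{CorLeiRivSte:01}) rather than reprove it. Everything else is a straightforward accounting of edges in $\FAB$ using Lemma~\ref{lem:flowgraph}.
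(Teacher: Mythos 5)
Your proposal is correct and follows essentially the same route as the paper's own proof: bound the minimum weight perfect matching stage by $O(n^3)$, the greedy SCC-to-input assignment by $O(n^2)$ using $q = O(n)$ and $m = O(n)$, and the flow-vector construction by a lower-order term, then sum. The only (harmless) difference is that you supply slightly more detail on the matching algorithm and the edge-counting, whereas the paper states these bounds directly.
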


\begin{proof}
Stage one of Algorithm~\ref{alg:twostage} where we solve a minimum weight perfect matching has complexity $O(n^{3})$. Stage two of Algorithm~\ref{alg:twostage} where a greedy scheme is employed to connect all non-top linked SCC's has $O(n^2)$ complexity, since $q = O(n)$ and $m=O(n)$. From the two stages we get $M_A$ and $\S_A$. Construction of flow vector $f_A$ using $M_A$ and $\S_A$ given in Algorithm~\ref{alg:flow_construct} is of linear complexity. Thus, Algorithm~\ref{alg:twostage} has complexity $O(n^{3})$.
\end{proof}
\section{Special Cases} \label{sec:cases}
In this section, we discuss two special cases of Problem~\ref{prob:two}. Firstly, consider the case where there is a perfect matching in the state bipartite graph $\B(\bA)$. Note that Problem~\ref{prob:two} is NP-hard over this special class of structured systems also \cite{PeqSouPed:15}. We give the following approximation result for this class of systems.

\begin{cor}\label{cor:class_matching}
Let there exists a perfect matching in the bipartite graph $\B(\bA)$. Then, the solution of Problem~\ref{prob:opt}, $\flp$, is a \mbox{$(\Delta-1)$-approximate} solution to MCFF($q+n$), i.e. $c_\flp \leqslant (\Delta-1) \, c^\*$.
\end{cor}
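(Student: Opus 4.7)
The plan is to sharpen the bound in Lemma~\ref{lem:cAndc} by exploiting the fact that a perfect matching in $\B(\bA)$ can be realized entirely from state-state edges, i.e.\ without activating any of the edges $(u_j, u'_j)$. The key observation is that if we can exhibit a feasible flow vector $f'$ for Problem~\ref{prob:opt} in which every edge $(u_j, u'_j)$ carries zero flow, then the flow through each edge $(u'_j, t)$ originates solely from incoming $(\cN_i, u'_j)$ edges and is therefore bounded by $\Delta - 1$ rather than $\Delta$.

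Concretely, let $M_0$ denote a perfect matching in $\B(\bA)$ and let $f^\*_M$ denote an optimal flow vector for MCFF($q+n$) on $\Fc$. We would construct $f'$ by (i) routing the $n$ units emanating from the source through $\{x'_k\}_{k=1}^n$ along the matching $M_0$ via the edges $(x'_k, x_r) \to (x_r, t)$, and (ii) routing the $q$ units emanating from the source through $\{\cN_i\}_{i=1}^q$ along exactly the same $(\cN_i, u'_j) \to (u'_j, t)$ paths that $f^\*_M$ uses. Flow conservation at every $x'_k$ and $x_r$ is immediate from $M_0$ being a perfect matching of $V_{\x}$ and $V_{\dx}$; flow on $(u_j, u'_j)$ is identically zero by construction; and the only edges whose flow can exceed $1$ are the $(u'_j, t)$ edges, whose total in-flow is bounded by $q \leqslant n < n+1$, well within capacity. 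Hence $f'$ is feasible for Problem~\ref{prob:opt} with $\varphi_{f'} = q+n$.

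It then remains to bound $C_{f'}$. Since $c(\cdot)$ is supported only on the $(u'_j, t)$ edges, $C_{f'} = \sum_j p_u(j)\, f'((u'_j, t))$. Any $j$ with $f'((u'_j, t)) > 0$ must have some $\cN_i$ routing through $u'_j$ in $f^\*_M$, which forces $j \in \I_{f^\*_M}$; moreover, because no flow enters $u'_j$ via $(u_j, u'_j)$ in $f'$, the quantity $f'((u'_j, t))$ is at most the number of incoming SCC edges to $u'_j$ in $\Fc$, which is itself at most $\Delta - 1$. Therefore
\[
C_{f'} \;\leqslant\; (\Delta - 1) \sum_{j \in \I_{f^\*_M}} p_u(j) \;=\; (\Delta - 1)\, c^\*,
\]
and combining this with $c_\flp \leqslant \c \leqslant C_{f'}$, which follows from Lemma~\ref{lem:coptAndc} together with feasibility of $f'$ in Problem~\ref{prob:opt}, yields the claim.

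The principal subtlety we foresee lies in the feasibility verification for $f'$, specifically in composing the state-matching routing and the SCC-to-input routing into a single coherent flow. This is ultimately mild because the two sub-routings share only the vertices $u'_j$, at which the only relevant capacity constraint is on the outgoing edge $(u'_j,t)$; since this capacity is $n+1$ and is never approached, the composition is clean and no interaction between the two routings arises.
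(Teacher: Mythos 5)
Your proof is correct and follows essentially the same route as the paper: both arguments rest on the observation that a perfect matching in $\B(\bA)$ lets the matching portion of the flow bypass the $(u_j,u'_j)$ edges entirely, so the effective in-degree of each $u'_j$ drops to $\Delta-1$ and the bound of Theorem~\ref{th:approximation} tightens accordingly. Your version is in fact more careful than the paper's one-line justification, since you explicitly construct the witness flow $f'$ from $M_0$ and the SCC-routing of $f^\*_M$, verify its feasibility, and note that $\{j: f'((u'_j,t))>0\}\subseteq \I_{f^\*_M}$, all of which the paper leaves implicit.
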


\begin{proof}
If there exists a perfect matching in the bipartite graph $\B(\bA)$ the edges $(u_j, u'_j)$ does not carry any flow and thus these edges can be removed from $\Fc$. Thus the effective in-degree of the nodes $\{u'_1,\ldots,u'_m\}$ is atmost $\Delta - 1$. Thus using Theorem~\ref{th:approximation} we get the required bound.
\end{proof}
The second class of structured systems considered are the ones where the digraph $\D(\bA)$ is irreducible. In such a case Problem~\ref{prob:two} is no longer NP-hard. Here, we consider two cases: (a)~$\B(\bA)$ has a perfect matching, and (b)~$\B(\bA)$ does not have a perfect matching.
\begin{cor}\label{cor:class_scc}
If $\D(\bA)$ is irreducible, then an optimal solution to Problem~\ref{prob:opt}, $\flp$, gives an optimal solution to MCFF($q+n$). Also, $c_\flp = c^\*$.
\end{cor}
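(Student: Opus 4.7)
The plan is to exploit the very rigid structure that irreducibility imposes on $\Fc$ in order to sharpen the generic $\Delta$-approximation of Theorem~\ref{th:approximation} to equality. Since $\D(\bA)$ is irreducible, the entire state set forms the unique non-top linked SCC, so $q = 1$ and $\cN_1 = V_{\x}$. This yields three observations that drive the argument: every input $u_j$ that touches any state produces an edge $(\cN_1, u'_j) \in E_F$, so the in-degree of each $u'_j$ in $\Fc$ is at most two and $\Delta \leq 2$; any input appearing in a perfect matching of $\B(\bA,\bB)$ automatically provides an SCC-coverage edge, because every state belongs to $\cN_1$; and consequently the LP and MCFF costs can differ only in how they account for the multiplicity of flow on the edges $(u'_j, t)$. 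I would split the argument along the two subcases highlighted just before the statement.

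In subcase~(a), where $\B(\bA)$ already admits a perfect matching, the reasoning used inside Corollary~\ref{cor:class_matching} applies directly: an LP optimum places no flow on any edge $(u_j, u'_j)$, and the effective in-degree of each $u'_j$ shrinks to one. Re-running Theorem~\ref{th:approximation} on this reduced network gives $c_\flp \leq (\Delta - 1)\,c^\* = c^\*$, while $c^\* \leq c_\flp$ is automatic because $\flp$ is MCFF-feasible by Lemma~\ref{lem:solution}. Combining the two inequalities forces $c_\flp = c^\*$ and yields MCFF optimality of $\flp$.

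In subcase~(b), $\B(\bA)$ has no perfect matching, so every perfect matching of $\B(\bA,\bB)$ must recruit at least one input edge; such an input then covers $\cN_1$ free of charge because $\cN_1 = V_{\x}$. The MCFF optimum is therefore exactly the minimum-weight perfect matching value $W$ of $\B(\bA,\bB)$ with input-edge weights $p_u(j)$ and state-edge weights $0$. I would close by producing an LP optimum in which the single unit of SCC flow is routed through a $u'_j$ whose matching edge $(u_j, u'_j)$ already carries a unit of flow, so that $f((u'_j, t)) = 2$ is absorbed on an already-paid input and $c_\flp$ still equals $W$. The hard part lies precisely here: the LP objective is additive in per-unit flow on each $(u'_j, t)$, so a naive LP optimum is free to route the SCC unit through the globally cheapest input $u_{j^\*}$ regardless of the matching, and if $u_{j^\*}$ participates in no minimum-weight matching then $c_\flp$ would exceed $W$ by $p_u(j^\*)$. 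Closing this gap requires an alternating-path style perturbation that swaps $u_{j^\*}$ into a minimum-weight matching at no net cost, or an equivalent coalescing step inside Algorithm~\ref{alg:twostage}; verifying that this always succeeds under irreducibility of $\D(\bA)$ is the principal technical difficulty of the proof.
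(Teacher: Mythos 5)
Your subcase~(a) is fine and agrees in substance with the paper: with a perfect matching in $\B(\bA)$ and $\D(\bA)$ irreducible, the matching part of the flow is free, a single input suffices for controllability, and both the LP and MCFF optima reduce to the cheapest input. The genuine gap is exactly where you flag it: subcase~(b) cannot be closed by perturbing LP optima on the \emph{original} network $\Fc$. There the LP objective decouples into the minimum-weight perfect-matching value $W$ plus a mandatory charge of $\min_j p_u(j)$ for the one unit of SCC flow, so every LP optimum routes that unit through the globally cheapest input $u_{j^\*}$. If $u_{j^\*}$ lies in no minimum-weight perfect matching of $\B(\bA,\bB)$ --- for instance, if it only reaches a state vertex that is already matched for free through a state--state edge while the matching is forced to recruit some other, more expensive input --- then no alternating-path swap brings $u_{j^\*}$ into the matching at zero net LP cost, and $c_\flp = W + p_u(j^\*) > W = c^\*$ for \emph{every} LP optimum. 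The coalescing step you hope for therefore does not always exist, and note that Algorithm~\ref{alg:twostage} itself exhibits the problem, since its greedy stage selects $u(\cN_1)$ as the globally cheapest input independently of $M_A$.

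The paper resolves this not by an argument on $\Fc$ but by \emph{changing the network}: in case~(b) it deletes the nodes $\cN_1,\ldots,\cN_q$ and their incident edges from $\Fc$ before solving the LP, on the grounds that irreducibility makes accessibility automatic once any input is used, and in case~(b) every perfect matching of $\B(\bA,\bB)$ must use at least one input. On the reduced network each $u'_j$ has in-degree $1$, so Theorem~\ref{th:approximation} applies with $\Delta = 1$ and yields $c_\flp \leqslant c^\*$, hence equality. In other words, the accessibility requirement is dropped from the flow problem rather than being charged to a separate unit of flow; your analysis correctly identifies that such a modification (or an equivalent reinterpretation of Problem~\ref{prob:opt}) is essential rather than merely convenient, but as written your proof does not supply it.
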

\begin{proof}
In case~(a) the structured system requires just one input to make it structurally controllable. Thus, the least cost input is the optimal solution. In case~(b), since the digraph $\D(\bA)$ is irreducible, any input will make all the states in the system accessible. Thus, we can simply remove the nodes corresponding to the non-top linked SCC's $\{\cN_1, \ldots,\cN_q\}$ and the edges associated with it from $\Fc$. Thus the effective in-degree of the nodes $\{u'_1,\ldots,u'_m\}$ is $1$. Thus using Theorem~\ref{th:approximation} we get the required. 
\end{proof}

\begin{rem}\label{rem:obs1}
By duality between controllability and observability in linear time invariant systems structural observability of $(\bA, \bC)$ is equivalent to structural controllability of $(\bA^T, \bC^T)$. Thus all results discussed in this paper is applicable to minimum cost constrained output selection, where given $\bA \in \{0, \*\}^{n \times n}$, $\bC \in \{0, \*\}^{p \times n }$, such that $(\bA, \bC)$ is structurally observable and a cost vector $p_y(j)$, $j =1,\ldots, p$, where $p_y(j)$ denote the cost of sensing $j^{\rm th}$ output, our aim is to find the minimum cost incurring output set $\widetilde{\I}$ such that the system $(\bA, \bC_{\widetilde{\I}})$ is observable, where $\bC_{\widetilde{\I}}$ consists of only those rows of $\bC$ whose indices are present in the set $\widetilde{\I}$.
\end{rem}

\begin{rem}\label{rem:discrete}
From linear systems theory, we know that the controllability criterion for continuous and discrete linear time invariant systems is same. That is, rank$~[B~AB~ \cdots A^{n-1}B] = n$, where $A,B$ are system matrices and $n$ is the system dimension. Thus all the analysis and results obtained for minCIS and minCCIS for continuous systems are applicable to minCIS and minCCIS problems of discrete systems.
\end{rem}
\section{Conclusion}\label{sec:conclu}
This paper addresses two structural controllability problems, namely minimum constrained input selection (minCIS) and minimum cost constrained input selection (minCCIS). Both of these problems are known to be to be NP-hard (see \cite{PeqSouPed:15}). Thus there do not exist polynomial algorithms for solving these unless P=NP. In this paper, we first provide a new graph theoretic necessary and sufficient condition based on flow networks for checking structural controllability (see Theorem \ref{th:maxflow}). Then we give a polynomial reduction of minCCIS to a NP-hard variant of the maximum flow problem, the minimum cost fixed flow problem (MCFF). We showed that an optimal solution to MCFF problem corresponds to an optimal solution to the minCCIS problem (see Theorem \ref{th:mcffformulation1}). We also showed that approximation schemes available for solving MCFF can be used to solve minCCIS (Theorem~\ref{th:epsilon1}). Using the special structure of the flow network constructed from the structured system $(\bA, \bB)$, we propose an approximation algorithm to solve minCCIS. In our main result we give a polynomial algorithm that obtains a $\Delta$-approximate solution to minCCIS (see Theorem \ref{th:approximation}). All the results for minCCIS directly applies to minCIS, since it is a special case of minCCIS where the costs associated with inputs are non-zero and uniform. In this work, both the problems are considered in their full generality without any assumptions on the system.
\bibliographystyle{myIEEEtran}  
\bibliography{minCCIS_Approx} 
\remove{
\begin{IEEEbiography}[\vspace{0mm}
{\includegraphics[width=1in,height=1in,clip,keepaspectratio]{Shana.jpg}}\vspace{0mm}]
{Shana Moothedath} obtained her B.Tech. and M.Tech. in Electrical and Electronics Engineering from Kerala
University, India in 2011 and 2014 respectively. Currently she is pursuing Ph.D. in the
Department of Electrical Engineering, Indian Institute of Technology Bombay. Her research interests include Matching or allocation Problem, structural analysis of control systems, combinatorial optimization and applications of graph theory.
\end{IEEEbiography}
\begin{IEEEbiography}[{\includegraphics[width=0.9in,height=0.9in,clip,keepaspectratio]{chaporkar.jpg}}]
{Prasanna Chaporkar} received his M.S. in Faculty of Engineering from Indian Institute of Science, Bangalore, India in 2000, and Ph.D. from University of Pennsylvania, Philadelphia, PA in 2006. He was an ERCIM post-doctoral fellow at ENS, Paris, France and NTNU, Trondheim, Norway. Currently, he is an Associate Professor at Indian Institute of Technology Bombay. His research interests are in resource allocation, stochastic control, queueing theory, and distributed systems and algorithms.
\end{IEEEbiography} 
\begin{IEEEbiography}[{\includegraphics[width=1in,height=1in,clip,keepaspectratio]{Madhu_Belur.jpg}}\vspace{0mm}]
{ Madhu N. Belur}
is at IIT Bombay since 2003, where he currently is a professor in the
Department of Electrical Engineering. His interests include dissipative
dynamical systems, graph theory and
open-source implementation for various applications.
\end{IEEEbiography}  }
\end{document}